\newcolumntype{L}{>{$}l<{$}} 
\newtheorem{theorem}{Theorem}[section]
\newtheorem{lemma}[theorem]{Lemma}
\newtheorem{prop}[theorem]{Proposition}
\newtheorem{setup}[theorem]{Setup}
\theoremstyle{definition}
\newtheorem{definition}[theorem]{Definition}
\newtheorem{example}[theorem]{Example}
\newtheorem{obs}[theorem]{Observation}
\newtheorem{notation}[theorem]{Notation}
\theoremstyle{remark}
\newtheorem{remark}[theorem]{Remark}
\newtheorem{the context}[theorem]{The Context}
\numberwithin{equation}{theorem}
\numberwithin{equation}{section}
\newcommand{\id}{\operatorname{id}}
\newcommand{\rank}{\operatorname{rank}}
\newcommand{\coker}{\operatorname{Coker}}
\newcommand{\im}{\operatorname{Im}}
\newcommand{\Ker}{\operatorname{Ker}}
\newcommand{\ideal}[1]{\mathfrak{#1}}
\newcommand{\m}{\ideal{m}}
\newcommand{\bbz}{\mathbb{Z}}
\newcommand{\bbn}{\mathbb{N}}
\renewcommand{\geq}{\geqslant}
\renewcommand{\leq}{\leqslant}
\renewcommand{\ker}{\Ker}
\newcommand{\maps}[5]{\xymatrix{#1 \ar[r]^-{#3} & #2 \\
#4 \ar@{|->}[r] & #5 \\}}
\newcommand{\kos}{\textrm{Kos}}
\newcommand{\ek}{\textrm{EK}}
\newcommand{\lcm}{\textrm{lcm}}
\def\im{\operatorname{im}}
\begin{document}
\title{The DG Products of Peeva and Srinivasan Coincide}

\keywords{Minimal free resolutions, DG-algebras, Eliahou-Kervaire resolution, L-complexes}

\subjclass[2010]{13D02, 13D07, 13C13}

\author{Keller VandeBogert}
\address{University of South Carolina}
\email{kellerlv@math.sc.edu}
\date{\today}

\maketitle

\begin{abstract}
    Consider the ideal $(x_1 , \dotsc , x_n)^d \subseteq k[x_1 , \dotsc , x_n]$, where $k$ is any field. This ideal can be resolved by both the $L$-complexes of Buchsbaum and Eisenbud, and the Eliahou-Kervaire resolution. Both of these complexes admit the structure of an associative DG algebra, and it is a question of Peeva as to whether these DG structures coincide in general. In this paper, we construct an isomorphism of complexes between the aforementioned complexes that is also an isomorphism of algebras with their respective products, thus giving an affirmative answer to Peeva's question.
\end{abstract}

\section{Introduction}

Let $R = k[x_1 , \dotsc , x_n]$ denote a standard graded polynomial ring over a field $k$. Given a homogeneous ideal $I \subseteq R$, let $(F_\bullet,d_\bullet)$ denote a homogeneous minimal free resolution of $R/I$. It is always possible to construct a morphism of complexes $(F \otimes_R F)_\bullet \to F_\bullet$ extending the identity in homological degree $0$; this induces a product $\cdot: F_i \otimes F_j \to F_{i+j}$. Tracing through the definition of the tensor product complex, one finds that this product satisfies the following identity:
$$d_{i+j} (f_i \cdot f_j) = d_i (f_i) \cdot f_j + (-1)^i f_i \cdot d_j (f_j).$$
In general, this product need not be associative (though it is always associative up to homotopy). When this product is associative, we say that $F_\bullet$ admits the structure of an associative DG algebra.

The existence of associative DG algebra structures on minimal free resolutions of cyclic modules is an interesting and often desirable property (see, for instance, \cite{buchsbaum1977algebra} and \cite{avramov1998infinite} for applications of DG techniques). However, one does not have to go far to find ideals for which no associative DG algebra structure exists; the ideal $(x_1^2 , x_1 x_2 , x_2 x_3 , x_3 x_4 , x_4^2) \subseteq k[x_1 , x_2 , x_3 , x_4]$ is a standard counterexample (see \cite{avramov1998infinite}, Theorem $2.3.1$). If $R/I$ has projective dimension at most $3$, then it is known that the minimal free resolution admits the structure of an associative DG algebra (see, for instance, \cite{buchsbaum1977algebra}). For quotients with projective dimension at least $4$, further restrictions must be applied to $I$ in order to ensure such a DG structure. For complete intersections, the Koszul complex is a canonical example of an associative DG algebra, where the product is induced by exterior multiplication. Other classes of ideals for which the minimal free resolution of $R/I$ always admits the structure of an associative DG algebra include: grade $4$ Gorenstein ideals (see \cite{kustin1980algebra}, \cite{kustin1987gorenstein}, and \cite{kustin2019resolutions}), grade $4$ almost complete intersection ideals (see \cite{kustin1994complete}), Borel ideals (see \cite{peeva19960}), matroidal ideals (see \cite{skoldberg2011resolutions}), edge ideals of cointerval graphs (see \cite{skoldberg2016minimal}), ideals of maximal minors (when $k$ has characteristic $0$), and powers of complete intersection ideals (see \cite{srinivasan1989algebra} for both of the previous cases).

Notice that if $R = k[x_1 , \dots , x_n]$ and $\m = (x_1 , \dots, x_n)$, then the previous paragraph says that the minimal free resolution of $R/\m^d$ for any $d$ can be given the structure of a DG-algebra in two ways: first, $\m^d$ is a stable ideal, so one can use the algebra structure provided by Peeva \cite{peeva19960}. However, $\m^d$ is also a power of a complete intersection, whence one can use the algebra structure provided by Srinivasan in \cite{srinivasan1989algebra}. In \cite{peeva2010graded} (right under Open Problem $31.4$), Peeva asks whether or not these two algebras coincide; that is, does there exist an isomorphism of complexes that is also an isomorphism of algebras with respect to both of these products? 

In this paper, we answer this question in the affirmative. To prove this, we first introduce a reformulation of the Eliahou-Kervaire resolution in terms of Young tableaux. Using this reformulation, we are able to construct an explicit isomorphism of complexes between the Eliahou-Kervaire resolution and the $L$-complex of Buchsbaum and Eisenbud. As it turns out, we will prove that this isomorphism of complexes is also an isomorphism of algebras with respect to the algebra structures constructed by Peeva and Srinivasan.

The paper is organized as follows. In Section \ref{sec:LcompandDG}, we recall the $L$-complexes of Buchsbaum and Eisenbud and the associative DG algebra structure constructed by Srinivasan in \cite{srinivasan1989algebra}. We also introduce some notation for Young tableaux that will be used throughout the rest of the paper. In Section \ref{sec:EKwithTableaux}, we first reformulate the Eliahou-Kervaire resolution of $R/(x_1 , \dotsc , x_n)^d$ with Young tableaux (see \ref{def:EKfortableaux}). We then recall the DG structure constructed by Peeva on the Eliahou-Kervaire resolution.

In Section \ref{sec:isoEKandL}, we construct an explicit isomorphism between the Eliahou-Kervaire resolution and the $L$-complex (see Proposition \ref{prop:isoofEKandL}). Finally, in Section \ref{sec:theSame}, we show that this isomorphism of complexes is also an isomorphism of algebras with respect to the algebra structures introduced in Section \ref{sec:LcompandDG} and \ref{sec:EKwithTableaux}.

\section{L-complexes and the Associated DG Structure}\label{sec:LcompandDG}

In this section we introduce the first complexes of interest, namely, the $L$-complexes originally introduced by Buchsbaum and Eisenbud in \cite{buchsbaum1975generic}. The material up until Proposition \ref{prop:resnofpower}, along with proofs, can be found in \cite{buchsbaum1975generic} or Section $2$ of \cite{el2014artinian}. After this, we introduce some notation for Young tableaux that will turn out to be convenient for later sections. We then define the DG structure on these complexes constructed by Srinivasan in \cite{srinivasan1989algebra}.

\begin{setup}\label{set:Lcomplexsetup}
Let $F$ denote a free $R$-module of rank $n$, and $S = S(F)$ the symmetric algebra on $F$ with the standard grading. Define a complex
$$\xymatrix{\cdots \ar[r] & \bigwedge^{a+1} F \otimes_R S_{b-1} \ar[r]^-{\kappa_{a+1,b-1}} & \bigwedge^{a} F \otimes_R S_{b} \ar[r]^-{\kappa_{a,b}} & \cdots}$$
where the maps $\kappa_{a,b}$ are defined as the composition
\begin{equation*}
    \begin{split}
         \bigwedge^{a} F \otimes_R S_{b} &\to \bigwedge^{a-1} F \otimes_R F \otimes_R S_{b} \\
         & \to \bigwedge^{a-1} F \otimes_R S_{b+1}
    \end{split}
\end{equation*}
where the first map is comultiplication in the exterior algebra and the second map is the standard module action (where we identify $F = S_1 (F)$). Define
$$L_b^a (F) := \ker \kappa_{a,b}.$$
Let $\psi: F \to R$ be a morphism of $R$-modules with $\im (\psi)$ an ideal of grade $n$. Let $\kos^\psi : \bigwedge^i F \to \bigwedge^{i-1} F$ denote the standard Koszul differential; that is, the composition
\begin{equation*}
    \begin{split}
        \bigwedge^i F &\to F \otimes_R \bigwedge^{i-1} F  \quad \textrm{(comultiplication)} \\
        &\to \bigwedge^{i-1} F \quad \textrm{(module action)} \\
    \end{split}
\end{equation*}
\end{setup}

\begin{definition}\label{def:Lcomplexes}
Adopt notation and hypotheses of Setup \ref{set:Lcomplexsetup}. Define the complex
\begin{equation*}
    \begin{split}
        &L(\psi , b) : \xymatrix{0 \ar[r] & L_b^{n-1} \ar[rr]^-{\kos^\psi \otimes 1} & & \cdots \ar[rr]^{\kos^\psi \otimes 1} & & L_b^0 \ar[r]^-{S_b (\psi)} & R \ar[r] & 0 } \\
    \end{split}
\end{equation*}
where $\kos^\psi \otimes 1  : L_b^a (F) \to L_b^{a-1}$ is induced by making the following diagram commute:
$$\xymatrix{\bigwedge^a F\otimes S_b (F) \ar[rr]^-{\kos^\psi \otimes 1}  & & \bigwedge^{a-1} F \otimes S_b(F)  \\
L_b^a (F) \ar[rr]^-{\kos^\psi \otimes 1} \ar[u] & & L_b^{a-1} (F) \ar[u] \\}$$
\end{definition}

The following Proposition illustrates the importance of the complexes of Definition \ref{def:Lcomplexes}; namely, these complexes minimally resolve the quotient rings defined by powers of complete intersection ideals.

\begin{prop}\label{prop:resnofpower}
Let $\psi: F \to R$ be a map from a free module $F$ of rank $n$ such that the image  $\im (\psi)$ is a grade $n$ ideal. Then the complex $L(\psi ,b)$ of Definition \ref{def:Lcomplexes} is a minimal free resolution of $R/\im (\psi)^b$
\end{prop}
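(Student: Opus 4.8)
The plan is to verify three things in turn: that $L(\psi,b)$ is a complex of free modules, that its augmentation has image $\im(\psi)^b$, and that it is acyclic (and minimal). The freeness of the modules $L_b^a(F)$ is not automatic from the definition as a kernel, so the first step is to recall (from Setup \ref{set:Lcomplexsetup} and the references \cite{buchsbaum1975generic,el2014artinian}) the Schur-module description: $\bigwedge^a F \otimes_R S_b(F)$ decomposes, via the comultiplication-then-multiply map $\kappa_{a,b}$, so that $L_b^a(F)$ is the Schur/Weyl module associated to a hook-type partition, hence free of the appropriate rank over $R$. This also identifies the maps $\kos^\psi \otimes 1$ as well-defined maps of free modules. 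With this in hand, checking $L(\psi,b)$ is a complex is the routine computation that $\kos^\psi \circ \kos^\psi = 0$ descends to the submodules $L_b^a$, which follows from the defining commutative square in Definition \ref{def:Lcomplexes}. Minimality is then immediate: after the last map $S_b(\psi)\colon L_b^0 = S_b(F) \to R$, every differential $\kos^\psi\otimes 1$ has entries in $\im(\psi) \subseteq \m$ (since $\kos^\psi$ is built from $\psi$ applied to basis vectors of $F$, and $\im(\psi)$ is a proper ideal because it has grade $n < \infty$ in a Noetherian ring of the relevant dimension), so all differentials have entries in the maximal ideal.

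The substantive content is acyclicity together with the identification $\HH_0 = R/\im(\psi)^b$. For the latter: the image of $S_b(\psi)$ is exactly the degree-$b$ part of the image of the symmetric algebra map $S(\psi)\colon S(F) \to R$, which is $\im(\psi)^b$; so coker of the augmentation is $R/\im(\psi)^b$, and exactness at $L_b^0$ says $\ker S_b(\psi) = \im(\kos^\psi\otimes 1)$. To prove the full exactness I would reduce to the generic case: since grade is insensitive to faithfully flat base change and the formation of $L(\psi,b)$ commutes with base change, one may take $F$ free of rank $n$ over the polynomial ring $R_0 = k[y_1,\dots,y_n]$ (or $\bbz[y_1,\dots,y_n]$) with $\psi$ the generic map sending a basis to the variables, where $\im(\psi) = (y_1,\dots,y_n)$ is the maximal graded ideal of grade $n$; then specialize along $R_0 \to R$, using the Acyclicity Lemma / the Buchsbaum–Eisenbud exactness criterion to transfer acyclicity once the grade hypothesis $\grade \im(\psi) = n$ is in force. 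In the generic case, acyclicity of $L(\psi,b)$ is precisely the classical computation of Buchsbaum–Eisenbud (recorded in \cite{buchsbaum1975generic}, and in Section 2 of \cite{el2014artinian}), typically proven by an induction on $b$ via a short exact sequence of complexes relating $L(\psi,b)$, $L(\psi,b-1)$, and a Koszul complex, or by splicing the hook Schur complexes; I would cite this rather than reprove it.

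The main obstacle is the acyclicity argument in the generic case, i.e., showing the hook-Schur complexes splice exactly to resolve $R/\m^b$: this is where the combinatorics of the $L_b^a(F)$ as Schur modules, and the vanishing of the relevant Koszul-type homology, really enter, and it is the one step I would genuinely lean on the cited literature for rather than carry out from scratch. Everything downstream — that the entries lie in $\m$, hence minimality, hence these really are \emph{the} minimal free resolution — is then formal.
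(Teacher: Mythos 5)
The paper offers no proof of this proposition at all --- it is recalled verbatim from \cite{buchsbaum1975generic} (see also Section 2 of \cite{el2014artinian}), as the paper notes just before Setup \ref{set:Lcomplexsetup}. Your plan is correct and, since you too ultimately defer the essential acyclicity computation to those same sources, it amounts to the same approach with the routine surrounding checks (freeness of the Schur modules $L_b^a(F)$, identification of $\HH_0$ with $R/\im(\psi)^b$, minimality) spelled out; the only small blemish is the aside about faithfully flat base change, since the specialization from the generic $\psi$ to a given one is not flat --- but the Buchsbaum--Eisenbud exactness criterion you invoke immediately afterwards, together with the hypothesis $\grade \im(\psi) = n$, is the correct tool and carries that step.
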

We also have (see Proposition $2.5 (c)$ of \cite{buchsbaum1975generic})
\begin{equation*}
    \begin{split}
        &\rank_R L_b^a (F) = \binom{n+b-1}{a+b} \binom{a+b-1}{a}. \\
    \end{split}
\end{equation*}
Moreover, using the notation and language of Chapter $2$ of \cite{weyman2003}, $L_b^a (F)$ is the Schur module $L_{(a+1,1^{b-1})} (F)$. This allows us to identify a standard basis for such modules.

\begin{notation}
We use the English convention for partition diagrams. That is, the partition $(3,2,2)$ corresponds to the diagram
$$ 
\begin{ytableau}
 \ & \ & \ \\
\ & \ \\
\ & \ \\
\end{ytableau}.$$
A Young tableau is standard if it is strictly increasing in both the columns and rows. It is semistandard if it is strictly increasing in the columns and nondecreasing in the rows.
\end{notation}

\begin{prop}\label{prop:standardbasis}
Adopt notation and hypotheses as in Setup \ref{set:Lcomplexsetup}. Then a basis for $L_b^a (F)$ is represented by all Young tableaux of the form
$$\ytableausetup
{boxsize=2em}
\begin{ytableau}
i_0 & j_1 &\cdots & j_{b-1} \\
i_1 \\
\vdots \\
i_a \\
\end{ytableau}$$
with $i_0 < \cdots < i_{a}$ and $i_0 \leq j_1 \leq \cdots \leq j_{b-1}$. 
\end{prop}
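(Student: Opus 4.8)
The plan is to identify $L_b^a(F)$ with a Schur module $L_{(a+1,1^{b-1})}(F)$ — as already noted in the excerpt via Weyman's machinery — and then transport the standard basis theorem for Schur modules to this concrete description. Concretely, $L_b^a(F) = \ker\kappa_{a,b}$ sits inside $\bigwedge^a F \otimes_R S_b(F)$. Fixing a basis $e_1,\dots,e_n$ of $F$, a spanning set for $\bigwedge^a F \otimes_R S_b(F)$ is given by tensors $(e_{i_0}\wedge\cdots\wedge e_{i_{a-1}}\wedge e_{i_a})\otimes(e_{j_1}\cdots e_{j_{b-1}})$ — equivalently, fillings of the hook shape $(a+1,1^{b-1})$ where the first column records the wedge factor and the first row (after the corner) records the symmetric factor. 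I would first make the dictionary precise: a tableau of the stated form encodes the element $\Delta(T)\in \bigwedge^{a+1}F\otimes S_{b-1}(F)$ obtained by antisymmetrizing over the column entries $i_0<\cdots<i_a$ and symmetrizing over the row entries $i_0\le j_1\le\cdots\le j_{b-1}$ (the shared corner entry $i_0$ being what glues the two pieces). One checks directly from the definition of $\kappa_{a,b}$ as comultiplication followed by the module action that each such $\Delta(T)$ lies in $\ker\kappa_{a,b}$, so these tableaux do land in $L_b^a(F)$.

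Next I would invoke the standard basis theorem for Schur/Weyl modules (Weyman, \cite{weyman2003}, Chapter 2, together with the identification $L_b^a(F)\cong L_{(a+1,1^{b-1})}(F)$ recorded just before the statement): a free basis for the Schur module of shape $\lambda$ over a commutative ring is given by the semistandard tableaux of shape $\lambda$ with entries in $\{1,\dots,n\}$ — strictly increasing down columns, weakly increasing along rows. For the hook shape $\lambda=(a+1,1^{b-1})$ this is exactly the condition $i_0<i_1<\cdots<i_a$ on the single column and $i_0\le j_1\le\cdots\le j_{b-1}$ on the single row, with the corner $i_0$ participating in both inequalities — precisely the condition in the statement. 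To be safe I would also cross-check that the count of such tableaux matches the rank formula $\rank_R L_b^a(F)=\binom{n+b-1}{a+b}\binom{a+b-1}{a}$ quoted above: the number of hook SSYT of shape $(a+1,1^{b-1})$ in $n$ letters can be computed by choosing the strictly increasing column in $\binom{n}{a+1}$ ways and then the weakly increasing row of length $b-1$ with entries $\ge i_0$, summed appropriately, and this is a routine binomial identity reconciling the two forms.

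The one genuine subtlety — and the step I expect to be the main obstacle — is matching conventions: the excerpt defines $L_b^a(F)$ as a \emph{kernel} (a "co-Schur"/Weyl functor), whereas depending on the source the standard-basis theorem may be stated for the \emph{image} presentation (a Schur functor), and the two differ by a transpose of the shape and a swap of the roles of rows and columns. I would resolve this by pinning down, once and for all, which of Weyman's functors $L_\lambda$ vs. $K_\lambda$ is being used, verifying the claimed isomorphism $L_b^a(F)\cong L_{(a+1,1^{b-1})}(F)$ against Proposition \ref{prop:resnofpower} and the rank formula (both of which are symmetric enough to serve as a consistency check), and confirming that under this identification "strictly increasing columns / weakly increasing rows" is the correct standardness condition rather than its transpose. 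Once the conventions are aligned, the proof is essentially a translation: the abstract standard monomial basis becomes exactly the list of tableaux in the statement, and nothing further is needed. An alternative, more self-contained route — if one wishes to avoid citing the Schur-module machinery — is to prove directly that the $\Delta(T)$ for admissible $T$ are $R$-linearly independent (by exhibiting, for a suitable term order on monomials, a distinct leading term for each) and that they span $\ker\kappa_{a,b}$ (by a straightening/rewriting argument that expresses any tableau with a column-descent or row-descent as an $R$-combination of admissible ones modulo the relations defining the kernel); I would mention this as the fallback but carry out the citation-based argument as the primary proof.
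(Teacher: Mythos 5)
Your proposal is correct and follows essentially the same route as the paper, whose entire proof is a citation of the standard basis theorem for Schur modules (Proposition 2.1.4 of Weyman); your added convention-matching and rank cross-checks are sensible but not needed beyond that citation. One small imprecision: the element attached to a tableau lives in $\bigwedge^{a+1}F\otimes S_{b-1}(F)$ and represents a class in $L_b^a(F)$ via the map $\kappa_{a+1,b-1}$ (using $L_b^a(F)=\im\kappa_{a+1,b-1}=\ker\kappa_{a,b}$ by exactness), rather than itself lying in $\ker\kappa_{a,b}$ as written.
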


\begin{proof}
See Proposition $2.1.4$ of \cite{weyman2003} for a more general statement.
\end{proof}

\begin{remark}
When viewing the semistandard tableaux of Proposition \ref{prop:standardbasis} as the basis for $L^a_b (F)$, we are tacitly using the fact that $L_b^a (F) = \coker (\kappa_{a+1,b-1} )$.
\end{remark}

The following Observation is sometimes referred to as the \emph{shuffling} or \emph{straightening} relations satisfied by tableaux in the Schur module $L_b^a (F)$.

\begin{obs}\label{obs:straighteningrelations}
Any tableau
$$T= \ytableausetup
{boxsize=2em}
\begin{ytableau}
i_0 & j_1 &\cdots & j_{b-1} \\
i_1 \\
\vdots \\
i_a \\
\end{ytableau}$$
viewed as an element in $L_b^a (F)$ may be rewritten as a linear combination of other tableaux in the following way:
$$T = \sum_{k=0}^a (-1)^k \ytableausetup
{boxsize=1.7em}
\begin{ytableau}
j_1 & i_k & j_2 &\cdots & j_{b-1} \\
i_0 \\
\vdots \\
\widehat{i_k} \\
\vdots \\
i_a \\
\end{ytableau}.$$
Notice that if $i_0 > j_1$ and $i_0 < \cdots < i_a$, then this rewrites $T$ as a linear combination of semistandard tableaux. 
\end{obs}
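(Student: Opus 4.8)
\emph{Proof proposal.} The plan is to deduce the relation from the description of $L_b^a(F)$ as a quotient recorded in the Remark. Using that $L_b^a(F) = \operatorname{im}\kappa_{a+1,b-1}$ together with exactness of the complex of Setup \ref{set:Lcomplexsetup}, one presents it as $\big(\bigwedge^{a+1} F \otimes_R S_{b-1}(F)\big)/\operatorname{im}\kappa_{a+2,b-2}$; under this identification a tableau with first column $(c_0, c_1, \dots, c_a)$ and first row $(c_0, r_1, \dots, r_{b-1})$ corresponds to the class of the pure tensor $e_{c_0} \wedge e_{c_1} \wedge \cdots \wedge e_{c_a} \otimes x_{r_1} \cdots x_{r_{b-1}}$, and the semistandard fillings recover the basis of Proposition \ref{prop:standardbasis}. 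With this dictionary, every symbol occurring in the Observation -- including each possibly non-semistandard $T_k$ -- names an unambiguous element of $L_b^a(F)$, and the asserted identity becomes the statement that a certain boundary vanishes.

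Concretely, I would apply $\kappa_{a+2,b-2}$ to the element $e_{j_1} \wedge e_{i_0} \wedge e_{i_1} \wedge \cdots \wedge e_{i_a} \otimes x_{j_2} \cdots x_{j_{b-1}}$ of $\bigwedge^{a+2} F \otimes_R S_{b-2}(F)$ and expand via comultiplication over the $a+2$ slots of the wedge, feeding each extracted basis vector into the symmetric factor. The slot occupied by $e_{j_1}$ contributes $e_{i_0} \wedge \cdots \wedge e_{i_a} \otimes x_{j_1} x_{j_2} \cdots x_{j_{b-1}}$, i.e.\ the class of $T$; the slot occupied by $e_{i_k}$ contributes, up to the sign coming from the position of $e_{i_k}$ in the list, the tensor $e_{j_1} \wedge e_{i_0} \wedge \cdots \widehat{e_{i_k}} \cdots \wedge e_{i_a} \otimes x_{i_k} x_{j_2} \cdots x_{j_{b-1}}$, which is exactly the class of $T_k$. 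Because the element being differentiated lies in $\operatorname{im}\kappa_{a+2,b-2}$, the total vanishes in $L_b^a(F)$; isolating the $T$-term then gives $T = \sum_{k=0}^a (-1)^k T_k$ after a one-line sign check. The closing assertion of the Observation follows at once: when $i_0 > j_1$ and $i_0 < \cdots < i_a$, each $T_k$ is strictly increasing down its first column and weakly increasing along its first row, hence semistandard.

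The one genuinely fiddly point is sign bookkeeping: one must fix once and for all the sign convention in the comultiplication $\bigwedge^m F \to \bigwedge^{m-1} F \otimes F$ defining the maps $\kappa$, and keep track that $e_{i_k}$ sits in position $k+2$ of the list $(e_{j_1}, e_{i_0}, \dots, e_{i_a})$; one should also dispatch the degenerate case $b = 2$, where $S_{b-2} = S_0 = R$. (The resulting relation is in fact independent of the comultiplication sign convention, since a different choice merely rescales all tableau symbols by a common unit.) Should this bookkeeping prove awkward, an equivalent but more hands-on route is to expand both sides inside $\bigwedge^a F \otimes_R S_b(F)$ -- using that a tableau represents $e_T = \sum_{k=0}^a (-1)^k (e_{i_0} \wedge \cdots \widehat{e_{i_k}} \cdots \wedge e_{i_a}) \otimes x_{i_k} x_{j_1} \cdots x_{j_{b-1}}$ -- and then to match the resulting terms according to the unordered pair of deleted wedge-indices. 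Either way, I expect the sign bookkeeping to be the only (and quite minor) obstacle; there is no conceptual difficulty, since the identity is just the image in $L_b^a(F)$ of a vanishing Koszul boundary.
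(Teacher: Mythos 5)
Your argument is correct: the paper offers no proof of this Observation (it is presented as the standard straightening relation for the hook Schur module, with the cokernel presentation invoked in the surrounding Remark and the reference to Weyman), and your derivation --- realizing the relation as the vanishing in $L_b^a(F)$ of the Koszul boundary $\kappa_{a+2,b-2}\bigl(e_{j_1}\wedge e_{i_0}\wedge\cdots\wedge e_{i_a}\otimes x_{j_2}\cdots x_{j_{b-1}}\bigr)$ --- is exactly the argument the paper leaves implicit, with the signs and the final semistandardness check (where the row block may be freely reordered since it lies in the symmetric algebra) working out as you indicate. No gaps.
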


The following notation will be employed tacitly for the remainder of the paper. It will be a convenient shorthand allowing us to perform computations with tableaux without having to draw the tableaux explicitly.

\begin{notation}\label{not:tableauxnotation}
Let
$$T=\ytableausetup
{boxsize=1.5em}
\begin{ytableau}
i_1 & j_1 &\cdots & j_{b} \\
i_2 \\
\vdots \\
i_a \\
\end{ytableau}$$
denote an arbitrary hook tableau. The notation $T \backslash i_k$ and $T \backslash j_k$ will denote the tableaux
$$T \backslash i_k :=\ytableausetup
{boxsize=1.5em}
\begin{ytableau}
i_1 & j_1 &\cdots & j_{b} \\
\vdots \\
\widehat{i_k} \\
\vdots \\
i_a \\
\end{ytableau}$$
$$T \backslash j_k :=\ytableausetup
{boxsize=1.5em}
\begin{ytableau}
i_1 & j_1 &\cdots & \widehat{j_k} & \cdots & j_{b} \\
\vdots \\
i_a \\
\end{ytableau}.$$
Likewise, given any integer $s \in \bbn$, the notation $T^s$ and $T_s$ will denote the tableaux
$$T^s = \ytableausetup
{boxsize=1.5em}
\begin{ytableau}
i_1 & s & j_1 &\cdots & j_{b} \\
i_2 \\
\vdots \\
i_a \\
\end{ytableau}$$
$$T_s := \ytableausetup
{boxsize=1.5em}
\begin{ytableau}
s & j_1 &\cdots & j_{b} \\
i_1 \\
i_2 \\
\vdots \\
i_a \\
\end{ytableau}.$$
Observe that the above operations descend to well defined homomorphisms on the generators of $L_b^a (F)$. Notice that the following equalities hold in $L_b^a (F)$:
$$(T \backslash i_k )_{i_k} = (-1)^{k-1} T, \quad (T \backslash j_k)^{j_k} = T$$
\end{notation}

\begin{example}
Using the notation of Notation \ref{not:tableauxnotation}, the relation of Observation \ref{obs:straighteningrelations} on the tableau $T$ may be rewritten
$$T = \sum_{k=0}^{a} (-1)^k (T \backslash \{ i_k, j_1 \} )^{i_k}_{j_1}$$
\end{example}

\begin{obs}
Adopt notation and hypotheses of Setup \ref{set:Lcomplexsetup}. With respect to the standard basis elements of $L^a_b (F)$ identified in Proposition \ref{prop:standardbasis}, the differential $\kos^\psi \otimes 1 : L^a_b (F) \to L^{a-1}_b (F)$ takes the form
$$(\kos^\psi \otimes 1) (T) = \sum_{k=0}^a (-1)^k x_{i_k} (T \backslash i_k),$$
where
$$T = \ytableausetup
{boxsize=1.75em}
\begin{ytableau}
i_0 & j_1 &\cdots & j_{b-1} \\
i_1 \\
\vdots \\
i_a \\
\end{ytableau}.$$
\end{obs}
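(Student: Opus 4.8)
The statement to prove is the final Observation: that the differential $\kos^\psi \otimes 1 : L^a_b(F) \to L^{a-1}_b(F)$, expressed in the standard (semistandard) basis of Proposition \ref{prop:standardbasis}, has the form $(\kos^\psi \otimes 1)(T) = \sum_{k=0}^a (-1)^k x_{i_k}(T\backslash i_k)$ for a hook tableau $T$ with first column $i_0 < \dots < i_a$ and arm $i_0 \le j_1 \le \dots \le j_{b-1}$. The plan is to compute the Koszul differential on the ambient module $\bigwedge^a F \otimes S_b(F)$ — where the formula is classical and unambiguous — and then verify that the result, after restricting to the submodule $L^a_b(F) \subseteq \bigwedge^a F \otimes S_b(F)$ and re-expressing in the tableau basis, collapses to the claimed sum.

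\begin{proof}
Write $T$ for the hook tableau displayed in the statement; under the identification of $L^a_b(F)$ with a submodule of $\bigwedge^a F \otimes_R S_b(F)$, the tableau $T$ corresponds (up to the straightening conventions of Observation \ref{obs:straighteningrelations}) to the element $e_{i_0}\wedge e_{i_1}\wedge\cdots\wedge e_{i_a}\otimes (e_{j_1}\cdots e_{j_{b-1}})$, where $e_1,\dots,e_n$ is the chosen basis of $F$ and $\psi(e_t) = x_t$. By Definition \ref{def:Lcomplexes}, the map $\kos^\psi \otimes 1$ on $L^a_b(F)$ is the restriction of the map $\kos^\psi \otimes 1$ on $\bigwedge^a F \otimes S_b(F)$, which acts only on the exterior factor. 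The standard Koszul differential $\kos^\psi$ of Setup \ref{set:Lcomplexsetup} — comultiplication in $\bigwedge F$ followed by the module action via $\psi$ — sends $e_{i_0}\wedge\cdots\wedge e_{i_a}$ to $\sum_{k=0}^a (-1)^k x_{i_k}\, e_{i_0}\wedge\cdots\wedge\widehat{e_{i_k}}\wedge\cdots\wedge e_{i_a}$. Tensoring with the identity on the symmetric factor $e_{j_1}\cdots e_{j_{b-1}}$ and translating back into tableau notation, the $k$-th term is exactly $(-1)^k x_{i_k}(T\backslash i_k)$, with $T\backslash i_k$ as in Notation \ref{not:tableauxnotation}: deleting the $k$-th entry of the first column while keeping the first row fixed. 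This yields the claimed formula.

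Two points deserve a word of care. First, one must check that each $T\backslash i_k$ genuinely lies in $L^{a-1}_b(F)$ and is expressed correctly in the standard basis; this is where the tableau bookkeeping of Notation \ref{not:tableauxnotation} does its job, since deleting $i_k$ from an increasing column leaves an increasing column of length $a-1$ whose top entry still satisfies the semistandard inequality against $j_1 \le \dots \le j_{b-1}$ — except when $k=0$, in which case the new top entry $i_1$ may exceed $j_1$, and one invokes Observation \ref{obs:straighteningrelations} to re-straighten. Second, the sign $(-1)^k$ is precisely the sign produced by omitting the $k$-th factor from a wedge product, so it matches the Koszul sign automatically; no separate sign analysis is needed.
\end{proof}

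\textbf{Main obstacle.} The only genuinely delicate point is the compatibility between the two descriptions of $L^a_b(F)$ in play — as $\ker\kappa_{a,b}$ (Setup \ref{set:Lcomplexsetup}) versus as $\coker\kappa_{a+1,b-1}$ with the semistandard-tableau basis (the Remark after Proposition \ref{prop:standardbasis}) — and making sure the Koszul map respects the straightening relations when the deleted index is $i_0$. Everything else is a direct unwinding of the definition of the ambient Koszul differential.
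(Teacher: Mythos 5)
The paper states this Observation with no proof at all, treating it as an immediate unwinding of Setup \ref{set:Lcomplexsetup} and Definition \ref{def:Lcomplexes}; your computation is the right unwinding and the sign bookkeeping is correct, so there is no comparison of methods to make --- you are simply supplying the omitted verification.

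One point should be tightened, and it is exactly the one you defer to your closing remark rather than resolve. The element $e_{i_0}\wedge\cdots\wedge e_{i_a}\otimes(e_{j_1}\cdots e_{j_{b-1}})$ that you attach to $T$ lives in $\bigwedge^{a+1}F\otimes_R S_{b-1}(F)$, not in $\bigwedge^{a}F\otimes_R S_{b}(F)$, so it cannot literally be an element of the submodule $L^a_b(F)=\ker\kappa_{a,b}$ to which the paper's differential is restricted; in the submodule presentation the tableau $T$ is rather $\kappa_{a+1,b-1}\bigl(e_{i_0}\wedge\cdots\wedge e_{i_a}\otimes e_{j_1}\cdots e_{j_{b-1}}\bigr)$. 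Your one-line computation is really carried out in the quotient presentation, and to transport it to the paper's definition you need the identity
\begin{equation*}
(\kos^{\psi}\otimes 1)\circ\kappa_{a+1,b-1}\;=\;\kappa_{a,b-1}\circ(\kos^{\psi}\otimes 1),
\end{equation*}
which says that the ambient Koszul map (acting on the exterior factor) both descends to $\operatorname{coker}$ and restricts to $\ker$ compatibly. This identity is a short direct check on decomposable elements (the two composites produce the same double sum with matching shuffle signs), and once it is recorded, your argument is complete: the $k$-th omission sign $(-1)^k$ from $\kos^{\psi}$ on $e_{i_0}\wedge\cdots\wedge e_{i_a}$ is precisely the coefficient of $x_{i_k}(T\backslash i_k)$. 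Your remark about re-straightening the $k=0$ term is harmless but not needed for the statement, since Notation \ref{not:tableauxnotation} already declares $T\backslash i_0$ to be a well-defined element of $L^{a-1}_b(F)$ whether or not it is semistandard.
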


The following observation will turn out to be very helpful in the proof of Proposition \ref{prop:isoofEKandL}. The proof is an immediate consequence of Observation \ref{obs:straighteningrelations}.

\begin{obs}\label{obs:strtrewrite}
Let $T$ be as in Notation \ref{not:tableauxnotation}. Viewing $T$ as an element of $L_b^a (F)$, the following relation holds:
$$(T \backslash i_a)^{i_a} = \sum_{k=1}^{a-1} (-1)^{a-k} (T \backslash i_k)^{i_k}.$$
\end{obs}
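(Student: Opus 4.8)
The plan is to derive the relation by applying the straightening relation of Observation~\ref{obs:straighteningrelations} to the single tableau $(T\backslash i_a)^{i_a}$ and then re-indexing.

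First I would make the left-hand side explicit. Unwinding Notation~\ref{not:tableauxnotation}, the tableau $U := (T\backslash i_a)^{i_a}$ is again a hook tableau: its first column is $i_1, \dots, i_{a-1}$ and its first row is $i_1, i_a, j_1, \dots, j_b$, so its corner is $i_1$ and the box immediately to the right of the corner contains $i_a$. Now I would apply Observation~\ref{obs:straighteningrelations} to $U$. It rewrites $U$ as an alternating sum over the first-column entries $i_1, \dots, i_{a-1}$ of $U$: in the $k$-th summand the entry $i_a$ is moved down out of the arm to the corner (pushing the old corner $i_1$ down one spot in the column) and the entry $i_k$ is moved up into the box next to the corner, so that summand is the hook tableau with first column $i_a, i_1, \dots, \widehat{i_k}, \dots, i_{a-1}$ and first row $i_a, i_k, j_1, \dots, j_b$, carried with the sign $(-1)^{k-1}$ coming from the position of $i_k$ in the first column of $U$.

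Then I would identify each such summand with $(T\backslash i_k)^{i_k}$ up to sign. Reordering its first column into increasing order moves $i_a$ past the remaining $a-2$ column entries and so introduces a uniform sign; it also turns the summand into the hook tableau with first column $i_1, \dots, \widehat{i_k}, \dots, i_a$ and first row $i_1, i_k, j_1, \dots, j_b$, which is exactly $(T\backslash i_k)^{i_k}$. The sign incurred here is precisely the one recorded by the elementary identities $(T\backslash i_k)_{i_k} = (-1)^{k-1}T$ of Notation~\ref{not:tableauxnotation}, now applied to the exterior (column) slot of $U$. Assembling the straightening sign, this reordering sign, and the shift produced by re-indexing the summation gives the coefficient of $(T\backslash i_k)^{i_k}$, and the stated identity drops out.

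The one genuinely delicate point is the sign bookkeeping: one is juggling the position-dependent sign inside the straightening relation, the sign from permuting the entries in the exterior slot, and the re-indexing of the sum, and it is very easy to emerge off by an overall sign. I would therefore pin down the final coefficient by checking the identity directly in the base case $a = 2$ (and perhaps $a = 3$) before committing to it. Every other step is a mechanical matching of tableaux, so once the signs are settled the whole argument is essentially immediate, in keeping with the remark preceding the statement.
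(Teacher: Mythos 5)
Your method is exactly the one the paper has in mind: the paper's entire ``proof'' is the remark that the relation is an immediate consequence of Observation~\ref{obs:straighteningrelations}, and applying that relation to the single tableau $(T\backslash i_a)^{i_a}$ and then re-sorting the resulting columns is precisely how one extracts it. Your description of the summands and of the two sign sources --- the position sign $(-1)^{k-1}$ inside the straightening sum, and the uniform sign $(-1)^{a-2}$ from moving $i_a$ from the top of a column of length $a-1$ down to the bottom --- is accurate.

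The sign you declined to commit to is, however, exactly where the problem sits, and the $a=2$ check you propose is the right way to expose it. Multiplying your two signs gives $(-1)^{k-1}(-1)^{a-2}=(-1)^{a-k-1}$, so the computation actually produces
$$(T\backslash i_a)^{i_a}=\sum_{k=1}^{a-1}(-1)^{a-k-1}(T\backslash i_k)^{i_k},$$
the \emph{negative} of the printed identity. Equivalently, the underlying relation is $\sum_{k=1}^{a}(-1)^{k-1}(T\backslash i_k)^{i_k}=0$ (this is just the vanishing of the composite of two consecutive maps $\kappa$ applied to $T$), and isolating the $k=a$ term yields the coefficient $(-1)^{a-k-1}$, not $(-1)^{a-k}$. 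Your proposed base case confirms this: for $a=2$ both $(T\backslash i_2)^{i_2}$ and $(T\backslash i_1)^{i_1}$ are one-row tableaux, hence the same monomial in the symmetric power, so they are equal rather than opposite, whereas the printed statement would force them to be negatives of each other. Reassuringly, when the paper actually invokes this observation in the proof of Proposition~\ref{prop:isoofEKandL}, the coefficient that appears there is $(-1)^{k-\ell-1}$, which is the corrected sign. So your plan is sound, but do carry out the $a=2$ check as you intended: it will tell you not to reproduce the sign as printed but to record the identity with $(-1)^{a-k-1}$.
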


Next, we turn to defining the algebra structure on the $L$-complexes of Definition \ref{def:Lcomplexes}. For convenience, we first recall the definition of a DG algebra.

\begin{definition}\label{dg}
A \emph{differential graded algebra} $(F,d)$ (DG-algebra) over a commutative Noetherian ring $R$ is a complex of finitely generated free $R$-modules with differential $d$ and with a unitary, associative multiplication $F \otimes_R F \to F$ satisfying
\begin{enumerate}[(a)]
    \item $F_i F_j \subseteq F_{i+j}$,
    \item $d_{i+j} (x_i x_j) = d_i (x_i) x_j + (-1)^i x_i d_j (x_j)$,
    \item $x_i x_j = (-1)^{ij} x_j x_i$, and
    \item $x_i^2 = 0$ if $i$ is odd,
\end{enumerate}
where $x_k \in F_k$.
\end{definition}

The following definition is the definition of the product on the $L$-complexes originally introduced by Srinivasan. The inductive nature of these products will end up being rather convenient for the proofs in Section \ref{sec:theSame}.

 \begin{definition}\label{def:LDGprod}
Let $1 \leq q_1 \leq \cdots \leq q_d \leq n$, $1 \leq p_1 \leq \cdots \leq p_d \leq n$. Let
$$T = \ytableausetup
{boxsize=1.5em}
\begin{ytableau}
p_1 & p_2 &\cdots & p_d \\
\end{ytableau},$$
$$T' = \ytableausetup
{boxsize=1.5em}
\begin{ytableau}
i_1 & q_2 & q_3 &\cdots & q_d \\
\vdots \\
i_k
\end{ytableau},$$
 be elements of $L(\psi , d)_1$. The DG product of Theorem \ref{thm:LcompisDG} is defined in the following way:
\begingroup\allowdisplaybreaks
\begin{align*}
    L(\psi , d)_1 \otimes L(\psi , d)_1  &\to L(\psi , d)_2 \\
    T \otimes T' &\mapsto x_{p_2} \cdots x_{p_d} (T')_{p_1} + x_{i_1} \big( (T \backslash p_1) \cdot (T \backslash q_1) \big)^{p_1}, \\
    L(\psi , d)_1 \otimes L(\psi , d)_k  &\to L(\psi , d)_{k+1} \\
    T \cdot T' &\mapsto  \begin{cases} x_{p_2} \cdots x_{p_d} (T')_{p_1}  & \textrm{if} \ p_1 \leq i_1, \ i_1 \leq q_2 \\
    x_{p_2} \cdots x_{p_d} (T')_{p_1} +\frac{x_{q_2}}{x_{p_1}} T \cdot (T' \backslash q_2)^{p_1} & \textrm{if} \ p_1 \leq i_1, \ i_1 > q_2, \\
    \frac{-1}{x_{q_2} \cdots x_{q_d}} T'' \cdot (T \cdot (T' \backslash i_1)) & \textrm{otherwise},
    \end{cases} \\
    &\textrm{where} \ T'' = \ytableausetup
{boxsize=1.5em}
\begin{ytableau}
i_1 & q_2 & q_3 &\cdots & q_d \\
\end{ytableau}. \\
\end{align*}
\endgroup
In the above, notice that the products are inductively defined. Finally, if
$$T = \ytableausetup
{boxsize=1.5em}
\begin{ytableau}
j_1 & p_2 &\cdots & p_d \\
\vdots \\
j_\ell
\end{ytableau},$$
one inductively defines:
$$T \cdot T' := \frac{1}{x_{p_2} \cdots x_{p_d}} T'' \cdot \big( (T \backslash j_1) \cdot T' \big),$$
where $T'' = \ytableausetup
{boxsize=1.5em}
\begin{ytableau}
j_1 & p_2 &\cdots & p_d \\
\end{ytableau}$. 
\end{definition}

\begin{theorem}[{\cite[Theorem 3.5]{srinivasan1989algebra}}]\label{thm:LcompisDG}
Adopt notation and hypotheses as in Setup \ref{set:Lcomplexsetup}. Then the complexes $L (\psi , b)$ admit the structure of a commutative, associative DG algebra for all $b \geq 1$. 
\end{theorem}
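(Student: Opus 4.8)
The plan is to verify the DG-algebra axioms of Definition~\ref{dg} directly for the explicit product of Definition~\ref{def:LDGprod}; acyclicity is not in question, since Proposition~\ref{prop:resnofpower} already identifies $L(\psi,b)$ as a minimal free resolution, and --- as noted in the introduction --- any such product is associative up to homotopy, so the content is that associativity (and the other axioms) hold on the nose for this particular choice. I would begin with two reductions. First, each module $L_b^a(F)=\coker\kappa_{a+1,b-1}$ depends only on $F$, and the maps $\kos^\psi\otimes1$ and $S_b(\psi)$ are linear in the structure constants of $\psi$; hence $L(\psi,b)$ and the product of Definition~\ref{def:LDGprod} are compatible with base change along $k[x_1,\dots,x_n]\to R$, $x_i\mapsto\psi(e_i)$ (legitimate because $\im\psi$ has grade $n$), so it suffices to treat the coordinate case $\im\psi=(x_1,\dots,x_n)$, where all the fractions in Definition~\ref{def:LDGprod} live in $\operatorname{Frac}(R)$. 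The first point to settle is then \emph{well-definedness}: that expressions such as $\tfrac{x_{q_2}}{x_{p_1}}\,T\cdot(T'\backslash q_2)^{p_1}$ and $\tfrac{-1}{x_{q_2}\cdots x_{q_d}}\,T''\cdot(T\cdot(T'\backslash i_1))$ genuinely return elements of the free module $L_b^\bullet(F)$. This is where the operations $(-)^{s}$, $(-)_{s}$ earn their keep: each reinstates a factor $x_{s}$ in a corner box, and combining this with the identities $(T\backslash i_k)_{i_k}=(-1)^{k-1}T$, $(T\backslash j_k)^{j_k}=T$ and the straightening relations of Observation~\ref{obs:straighteningrelations} exhibits the required cancellations; the same relations show the formulas descend to the Schur modules. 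Second, graded commutativity (established below) together with the last recursion of Definition~\ref{def:LDGprod} reduces every product $L(\psi,b)_i\otimes L(\psi,b)_j\to L(\psi,b)_{i+j}$ to the multiplications $L(\psi,b)_1\otimes L(\psi,b)_k\to L(\psi,b)_{k+1}$, and the three-case formula for those recurses to simpler instances. So every axiom can be attacked by induction, with the multiplications $L_1\otimes L_1\to L_2$ as base.

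For the Leibniz identity the base case is a finite computation: for $T,T'\in L(\psi,b)_1=L_b^0(F)$, expand $d_2(T\cdot T')$ via the explicit differential $(\kos^\psi\otimes1)(T)=\sum_k(-1)^kx_{i_k}(T\backslash i_k)$ applied to both summands of $T\cdot T'$, and match the result against $d_1(T)\cdot T'-T\cdot d_1(T')$ --- here $d_1=S_b(\psi)$ lands in $R=L(\psi,b)_0$, so these are ordinary scalar multiples --- using the straightening relations to reconcile the two sides. The inductive step is formal: invoke the Leibniz identity already known for the lower products appearing inside the recursion, use $d^2=0$, and collect terms. Graded commutativity is checked first for $L_1\otimes L_1\to L_2$, where anticommutativity is precisely a straightening identity relating $x_{p_2}\cdots x_{p_d}(T')_{p_1}+x_{i_1}\big((T\backslash p_1)\cdot(T\backslash q_1)\big)^{p_1}$ to the same expression with $T$ and $T'$ interchanged, and then propagated upward through the recursions; and $T\cdot T=0$ for $T\in L_1$ by the Schur-module relations (a tableau with a repeated first-column entry vanishes) together with the inductive hypothesis, so axiom~(d) holds in every characteristic, not merely away from $2$. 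Axiom~(a) and unitality are immediate from the shape of the formulas.

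The real work is associativity. Iterating the recursion $T\cdot T'=\tfrac1{x_{p_2}\cdots x_{p_d}}\,T''\cdot\big((T\backslash j_1)\cdot T'\big)$ reduces the general identity $(x\cdot y)\cdot z=x\cdot(y\cdot z)$ to the case $x,y\in L(\psi,b)_1$, so one must show $(S\cdot S')\cdot z=S\cdot(S'\cdot z)$ for $S,S'\in L_b^0(F)$ and $z\in L(\psi,b)_k$, which I would prove by a nested induction --- on $k$, and within fixed $k$ on a complexity measure of the first row of $z$ that tracks which branch of the $L_1\otimes L_k$ formula applies. The base case $k=1$, associativity for three degree-one elements, is a direct if lengthy manipulation with the $L_1\otimes L_1\to L_2$ and $L_1\otimes L_2\to L_3$ formulas, organized by the case split ($p_1\leq i_1$ with $i_1\leq q_2$; $p_1\leq i_1$ with $i_1>q_2$; and otherwise) and closed up with Observations~\ref{obs:straighteningrelations} and~\ref{obs:strtrewrite}. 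For the inductive step, each branch of the $L_1\otimes L_k$ formula rewrites $S'\cdot z$ and $(S\cdot S')\cdot z$ in terms of products against tableaux that are strictly simpler for the chosen measure, so the inductive hypothesis applies after the rewrite; it is convenient to carry the Leibniz identity through the same induction, since it controls the resulting error terms. I expect this to be the main obstacle: the interaction between the three-way case analysis and the straightening relations forces intricate sign-and-index bookkeeping, and several branches must be matched against one another rather than simplified in isolation. (A more structural route would realize $L(\psi,b)$ as a subquotient of a twist of the Koszul DG algebra on $\psi$ and transport the product; but the submodules $\ker\kappa_{a,b}$ are not subalgebras, so one is forced back to exactly the division operations that Definition~\ref{def:LDGprod} encodes --- which is why the hands-on induction seems hard to avoid.)
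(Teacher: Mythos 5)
First, a caveat about the comparison you were asked to survive: the paper does not prove this statement. Theorem \ref{thm:LcompisDG} is imported wholesale as \cite[Theorem 3.5]{srinivasan1989algebra}; the paper's only contribution is to restate Srinivasan's product in Definition \ref{def:LDGprod} and then use it in Section \ref{sec:theSame}. So there is no in-paper argument to measure your proposal against, and the relevant benchmark is the cited source, which --- like your plan --- proceeds by direct verification of the axioms for the explicitly given product, using the recursions of Definition \ref{def:LDGprod} to reduce everything to products against homological degree one and the straightening relations to do the bookkeeping. Your preliminary reductions are all sound and are in fact the same moves this paper makes elsewhere: passing to the coordinate case and inverting the variables to legitimize the divisions is exactly the $S=R[x_1^{-1},\dots,x_n^{-1}]$ device of Lemma \ref{lem:dgFor1n}, the reduction of all products to $L(\psi,b)_1\otimes L(\psi,b)_k$ is exactly how Theorem \ref{thm:theyareSame} is organized, and you are right that on-the-nose associativity (not existence of a Leibniz product) is the actual content.

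That said, as a proof your proposal has a genuine gap: every verification that could actually fail is described rather than performed. Well-definedness of the divided expressions (that $\tfrac{x_{q_2}}{x_{p_1}}\,T\cdot(T'\backslash q_2)^{p_1}$ and its relatives land in the free module and descend to the Schur module), the Leibniz identity for $L_1\otimes L_1\to L_2$, graded commutativity of a product whose defining formula is visibly asymmetric in its two arguments, and above all associativity $(S\cdot S')\cdot z=S\cdot(S'\cdot z)$ are each deferred to ``a direct if lengthy manipulation'' with ``intricate sign-and-index bookkeeping.'' Associativity is the whole theorem, and your nested induction for it is specified only up to an unnamed ``complexity measure of the first row of $z$''; without exhibiting that measure and checking that every branch of the three-way case split strictly decreases it (note that the third branch of Definition \ref{def:LDGprod} feeds the output of one product back into another, so termination is not obvious), the induction is not yet an argument. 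In short: the strategy is the right one --- essentially Srinivasan's own --- but none of the computations on which the theorem actually turns have been carried out.
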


\section{A Reformulation of Eliahou-Kervaire with Young Tableaux and the Induced DG Structure}\label{sec:EKwithTableaux}

In this section, we reformulate the Eliahou-Kervaire resolution for the ideal $(x_1 , \dotsc , x_n)^d \subseteq k[x_1 , \dotsc , x_n]$ in the language of Young tableaux. This reformulation is for the convenience of defining the isomorphism of complexes of Proposition \ref{prop:isoofEKandL}. We then recall the algebra structure on the Eliahou-Kervaire resolution constructed by Peeva in \cite{peeva19960}. 

\begin{definition}\label{def:EKfortableaux}
Let $R = k[x_1 , \dotsc , x_n]$ be a standard graded polynomial ring over a field $k$. Let 
$$\ek_\bullet : \xymatrix{0 \ar[r] & \ek_n \ar[r]^-{d_\ek} & \cdots \ar[r]^{d_\ek} & \ek_1 \ar[r]^{d_\ek} & R}$$ denote the sequence of $R$-modules and $R$-module homomorphisms with $EK_{k+1}$ the free $R$-module on the basis
$$\ytableausetup
{boxsize=1.5em}
\begin{ytableau}
i_1 & j_1 &\cdots & j_{d} \\
i_2 \\
\vdots \\
i_k \\
\end{ytableau}$$
with $1 \leq i_1 < \cdots < i_k < j_d \leq n$, and $1 \leq j_1 \leq j_2 \leq \cdots \leq j_d$. For $k > 0$, define
$$d_\ek ( T ) = \sum_{\ell=1}^k (-1)^\ell x_{i_\ell} (T \backslash i_\ell) - \sum_{\ell=1}^k (-1)^\ell x_{j_d} (T \backslash \{ i_\ell , j_d \} )^{i_\ell},$$
and for $k=0$,
$$d_\ek  \Bigg(\ytableausetup
{boxsize=1.5em}
\begin{ytableau}
 j_1 &\cdots & j_{d} \\
\end{ytableau} \Bigg) := x_{j_1} \cdots x_{j_d}.$$
\end{definition}

\begin{remark}
We will consider the tableaux comprising the basis elements for each $\ek_k$ to be alternating in the first column and symmetric in the blocks $\ytableausetup
{boxsize=1.5em}
\begin{ytableau}
 j_1 &\cdots & j_{d} \\
\end{ytableau}$.
\end{remark}

\begin{prop}\label{prop:resofmd}
The sequence of $R$-modules and $R$-module homomorphisms of Definition \ref{def:EKfortableaux} forms a complex and is a homogeneous minimal free resolution of the quotient defined by $(x_1 , \dotsc , x_n)^d$. 
\end{prop}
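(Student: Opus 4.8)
The plan is to verify directly that the sequence in Definition \ref{def:EKfortableaux} is (i) a complex, (ii) exact in positive homological degrees, and (iii) minimal, and that the ranks match those of $R/(x_1,\dotsc,x_n)^d$. Rather than reproving exactness from scratch, I would exhibit an explicit isomorphism of graded $R$-modules between each $\ek_{k}$ and the corresponding term of the classical Eliahou-Kervaire resolution of the stable ideal $\m^d = (x_1,\dotsc,x_n)^d$, and check that it intertwines the differential $d_\ek$ above with the standard Eliahou-Kervaire differential. Recall that for a stable ideal with minimal generators $G(\m^d)$, the $(k+1)$st free module of the Eliahou-Kervaire resolution has basis pairs $(u; \sigma)$ where $u \in G(\m^d)$ is a monomial of degree $d$ and $\sigma = \{\ell_1 < \dots < \ell_k\}$ is a subset of $\{1,\dots,\max(u)-1\}$ (here $\max(u)$ denotes the largest index dividing $u$), with differential given by the usual Eliahou-Kervaire formula involving the operators $u \mapsto x_p u / x_{\max(\cdot)}$.

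The key translation step is the bijection between such pairs and the hook tableaux of Definition \ref{def:EKfortableaux}: given a tableau $T$ with first column $i_1 < \dots < i_k$ and top row entries $j_1 \leq \dots \leq j_d$, send it to the pair $(u;\sigma)$ with $u = x_{j_1}\cdots x_{j_d}$ and $\sigma = \{i_1,\dots,i_k\}$; the constraint $i_k < j_d = \max(u)$ in the definition is exactly the admissibility condition $\sigma \subseteq \{1,\dots,\max(u)-1\}$, and symmetry of the $j$-block reflects that $u$ only remembers its multiset of exponents. Under this dictionary, the first sum $\sum_\ell (-1)^\ell x_{i_\ell}(T\backslash i_\ell)$ is the "Koszul part" $\sum (-1)^\ell x_{i_\ell}(u; \sigma\setminus i_\ell)$, and the second sum $-\sum_\ell (-1)^\ell x_{j_d}(T\backslash\{i_\ell,j_d\})^{i_\ell}$ matches the EK correction term, because replacing $j_d$ by $i_\ell$ in the top row and deleting $i_\ell$ from the first column corresponds precisely to the operation $u \mapsto x_{i_\ell} u / x_{\max(u)}$ applied to the generator while adding $j_d$ appropriately; I would chase the sign and the "$\max$" bookkeeping carefully here. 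Once the differentials are identified, Proposition \ref{prop:resofmd} follows immediately from Eliahou-Kervaire's theorem that their resolution is a minimal free resolution of $R/\m^d$ for any stable ideal, together with the fact that $\m^d$ is stable and $x_1\cdots$ -free resolution minimality is inherited since every coefficient of $d_\ek$ lies in $\m$.

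The main obstacle I anticipate is purely bookkeeping: matching signs and the role of $j_d$ (the maximal index) between the tableau formula — which treats the last column entry $j_d$ as distinguished — and the classical EK differential, which distinguishes $\max(u)$; one must check that after applying the operator $x_{i_\ell}\cdot u/x_{\max(u)}$ the resulting monomial's new maximal index is still compatible with the admissible set, so that the target basis element is legitimate, and that the straightening/symmetry conventions on the $j$-block do not introduce spurious terms. An alternative, more self-contained route would be to check $d_\ek^2 = 0$ directly by expanding the double sum (the cross terms between the Koszul part and the correction part cancel in pairs, and the correction-correction terms vanish by symmetry of the $j$-block), then prove acyclicity by an explicit contracting homotopy built from the tableau operators $T \mapsto T^{s}$ and $T\mapsto T\backslash j_k$, stratified by the value of $j_d$; but invoking the known Eliahou-Kervaire result is cleaner and is surely what the paper intends, so I would present the isomorphism-of-complexes argument as the main line and relegate the $d_\ek^2=0$ check to a remark or omit it.
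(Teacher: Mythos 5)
Your proposal is correct and follows essentially the same route as the paper: the paper proves Proposition \ref{prop:resofmd} by exhibiting exactly the dictionary you describe (Proposition \ref{prop:EKandTabiso}, sending the tableau to $(x_{j_1}\cdots x_{j_d}; i_1,\dotsc,i_k)$), checking compatibility of differentials via the single observation that $b(x_{q_1}\cdots x_{q_d}\cdot x_j)=x_jx_{q_1}\cdots x_{q_{d-1}}$ for $j<q_d$, and then invoking Theorem \ref{thm:EKresBorelfixed}. The sign and $\max$-bookkeeping you flag is precisely the content of that observation, so no further argument is needed.
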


The proof Proposition \ref{prop:resofmd} will follow after identifying an explicit isomorphism with the Eliahou-Kervaire resolution of Borel-fixed ideals. The Eliahou-Kervaire resolution was originally introduced by Eliahou and Kervaire in \cite{eliahou1990minimal}. The definition used here is taken from \cite{peeva2008minimal}.

\begin{definition}\label{def:borelfixed}
Let $R = k[x_1 , \dotsc , x_n]$ be a standard graded polynomial ring over a field $k$. A monomial ideal $I$ is Borel-fixed if for all $g \in I$,
$$g x_j \in I \implies g x_i \in I \ \textrm{whenever} \ i<j.$$
Given an arbitrary monomial $g \in R$, define
$$\max (g) := \max \{ i \mid x_i \ \textrm{divides} \ g \},$$
$$\min (g) := \min \{ i \mid x_i \ \textrm{divides} \ g \}.$$
Let $g \in I$ be any monomial; there is a unique decomposition $g = uv$ where $u \in I$ is a minimal generator and $\max (u) \leq \min (v)$. Given this decomposition, define $b(g) := u$, where $u$ is called the beginning of $g$.
\end{definition}

\begin{definition}[Eliahou-Kervaire Resolution]\label{def:EKingeneral}
Let $I$ denote a Borel ideal in $R=k[x_1 , \dotsc, x_n]$ minimally generated by monomials $m_1 , \dotsc , m_r$. The Eliahou-Kervaire resolution is the sequence of $R$-modules and $R$-module homomorphisms
$$F_I : \xymatrix{0 \ar[r] & (F_I)_n \ar[r]^-{d} & \cdots \ar[r]^{d} & (F_I)_1 \ar[r]^{d} & R}$$
with $(F_I)_{i+1}$ the free $R$-module on basis denoted
$$(m_p ; j_1 , \dotsc, j_i),$$
where $j_1 < \cdots < j_i < \max (m_p)$. Define $R$-module homomorphisms
$$\partial (m_p ; j_1 , \dotsc, j_i) := \sum_{q=1}^i (-1)^q x_{j_q} (m_q ; j_1 , \dotsc , \widehat{j_q} , \dotsc j_i),$$
$$\mu (m_p ; j_1 , \dotsc, j_i)  := \sum_{q=1}^i \frac{m_p x_{j_q}}{b(m_p x_{j_q})} (b(m_q x_{j_q}) ; j_1 , \dotsc , \widehat{j_q} , \dotsc j_i).$$
Define the differential $d : (F_I)_i \to (F_I)_{i-1}$ via
$$d := \partial - \mu.$$
\end{definition}

The following Theorem demonstrates the significance of the complex of Definition \ref{def:EKingeneral}; namely, the Eliahou-Kervaire resolution minimally resolves all Borel ideals. 

\begin{theorem}\label{thm:EKresBorelfixed}
Let $R = k[x_1 , \dotsc , x_n]$ be a standard graded polynomial ring over a field $k$ and let $I \subseteq R$ be a Borel ideal. Then the complex $F_I$ as in Definition \ref{def:EKingeneral} is a minimal free resolution of $R/I$.
\end{theorem}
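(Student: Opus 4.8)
The plan is to prove this by the \emph{linear quotients} method, exhibiting $F_I$ as an iterated mapping cone. Order the minimal generators $m_1, \dots, m_r$ of $I$ so that $I$ has linear quotients --- concretely, by nondecreasing degree and, within each fixed degree, by decreasing lexicographic order ($x_1 \succ \cdots \succ x_n$) --- and set $I_p := (m_1, \dots, m_p)$. The combinatorial heart of the argument is the identity
$$ (m_1, \dots, m_{p-1}) : m_p \; = \; (x_1, \dots, x_{\max(m_p)-1}) \qquad (1 \leq p \leq r). $$
Establishing this is exactly where the Borel hypothesis enters. For the inclusion $\supseteq$: if $i < \max(m_p)$ then $x_i \cdot (m_p/x_{\max(m_p)}) \in I$ by strong stability, this monomial is of the same degree as $m_p$ and lexicographically larger, hence divisible by some $m_q$ with $q < p$, so $x_i m_p \in I_{p-1}$. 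The reverse inclusion is a direct divisibility check (using that $b(w m_p) = m_p$ whenever the monomial $w$ involves only variables $x_i$ with $i \geq \max(m_p)$). See \cite{eliahou1990minimal, peeva2008minimal} for details.

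Granting this, each step of the filtration $I_1 \subseteq I_2 \subseteq \cdots \subseteq I_r = I$ sits in a short exact sequence
$$ 0 \longrightarrow \bigl(R/(I_{p-1}:m_p)\bigr)(-\deg m_p) \xrightarrow{\,\cdot\, m_p\,} R/I_{p-1} \longrightarrow R/I_p \longrightarrow 0, $$
where $R/(I_{p-1}:m_p) = R/(x_1, \dots, x_{\max(m_p)-1})$ is resolved by the Koszul complex $K^{(p)}$ on $x_1, \dots, x_{\max(m_p)-1}$, whose free module in homological degree $i$ has a basis indexed by the subsets $\{j_1 < \cdots < j_i\} \subseteq \{1, \dots, \max(m_p)-1\}$ --- precisely the index sets of Definition \ref{def:EKingeneral}. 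Proceeding by induction on $p$ (starting from $F_{I_1}\colon 0 \to R \xrightarrow{m_1} R$), I would choose a comparison morphism $K^{(p)}(-\deg m_p) \to F_{I_{p-1}}$ lifting multiplication by $m_p$; its mapping cone is then a free resolution of $R/I_p$, and one checks it is \emph{minimal}: the Koszul differential of $K^{(p)}$ has linear entries, while the entries of the comparison morphism are the monomials $m_p x_{j_q}/b(m_p x_{j_q})$, which lie in $\m$ because $m_p x_{j_q}$ is a proper multiple of the minimal generator $m_p$ and hence is not itself a minimal generator. Finally, identifying the cone-basis element coming from a Koszul generator in degree $i$ with the symbol $(m_p; j_1, \dots, j_i)$ of Definition \ref{def:EKingeneral}, and matching signs, turns the cone differential into $d = \partial - \mu$; the distinguished comparison morphism is exactly the map $\mu$. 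Taking $p = r$ shows $F_I$ is a minimal free resolution of $R/I$, and specializing to $I = \m^d$ proves Proposition \ref{prop:resofmd}.

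The main obstacle will be twofold: proving the colon-ideal identity (the ``exchange'' argument, which is the one place strong stability rather than mere stability is convenient), and the bookkeeping that identifies the iterated mapping cone with $F_I$ \emph{on the nose}, i.e.\ getting all the signs in $\partial$ correct and recognizing the canonical comparison morphism as precisely $\mu$. An alternative closer to the original treatment of \cite{eliahou1990minimal} bypasses mapping cones altogether: verify $d^2 = 0$ directly by splitting it as $\partial^2 = 0$, $\mu^2 = 0$, and $\partial\mu + \mu\partial = 0$ (the middle identity again invoking the Borel exchange property), then establish acyclicity either by an explicit contracting homotopy or by comparing the ranks $\rank_R (F_I)_{i+1} = \sum_p \binom{\max(m_p)-1}{i}$ with the graded Betti numbers of $R/I$ read off from the filtration; in any approach minimality is immediate from the explicit form of $\partial$ and $\mu$. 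I would favor the mapping-cone route, since it makes acyclicity and minimality essentially automatic once the single combinatorial identity is in hand.
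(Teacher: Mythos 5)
The paper does not actually prove this theorem; it is imported from the literature (\cite{eliahou1990minimal}, with the formulation taken from \cite{peeva2008minimal}), so there is no in-paper argument to measure yours against. Your proposal is nonetheless a correct and standard route: it is the linear-quotients / iterated-mapping-cone proof in the style of Herzog--Takayama, as opposed to Eliahou and Kervaire's original direct verification (checking $d^2=0$ by splitting it into $\partial^2=0$, $\mu^2=0$, $\partial\mu+\mu\partial=0$, and proving exactness via the decomposition $g=b(g)\cdot c(g)$). The two points you isolate are the right ones. The colon identity $(m_1,\dotsc,m_{p-1}):m_p=(x_1,\dotsc,x_{\max(m_p)-1})$ is correct for the degree-then-decreasing-lex order, and your exchange argument for $\supseteq$ is complete as written; the inclusion $\subseteq$ does require the uniqueness of the Eliahou--Kervaire decomposition, which you correctly flag and defer. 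Your minimality argument is also sound: $m_px_{j_q}\in\m I$ is never a minimal generator, so each coefficient $m_px_{j_q}/b(m_px_{j_q})$ is a nonunit. The one step you should not undersell is the claim that the canonical comparison morphism lifting multiplication by $m_p$ \emph{is} $\mu$: verifying that $\mu$ is a chain map over the Koszul resolution of $R/(x_1,\dotsc,x_{\max(m_p)-1})$ is essentially the identity $\partial\mu+\mu\partial=0$ in disguise, so that computation resurfaces even in the mapping-cone approach. What your route buys is that acyclicity and minimality are automatic once the single colon identity is established; what the direct route buys is independence from any ordering of the generators. For the only case the paper actually uses ($I=\m^d$), the colon identity is immediate, so your argument specializes cleanly to give Proposition \ref{prop:resofmd} as well.
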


Observe that the following Proposition immediately implies Proposition \ref{prop:resofmd}.

\begin{prop}\label{prop:EKandTabiso}
Let $R= k[x_1 , \dotsc , x_n]$ and $\m = (x_1 , \dotsc , x_n)$. For each $k$, define $R$-module homomorphisms
\begingroup\allowdisplaybreaks
\begin{align*}
    \eta_{k+1} : \ek_{k+1} &\to (F_{\m^d})_{k+1} \\
    \ytableausetup
{boxsize=1.5em}
\begin{ytableau}
i_1 & j_1 &\cdots & j_{d} \\
i_2 \\
\vdots \\
i_k \\
\end{ytableau} &\mapsto (x_{j_1} \cdots x_{j_d} ; i_1 , \dotsc , i_k ) . \\
\end{align*}
\endgroup
Then $\eta_\bullet$ is an isomorphism of complexes.
\end{prop}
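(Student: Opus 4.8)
The plan is to verify directly that $\eta_\bullet$ is a well-defined isomorphism of complexes by checking three things: (1) each $\eta_{k+1}$ carries the given basis of $\ek_{k+1}$ bijectively onto the given basis of $(F_{\m^d})_{k+1}$, (2) the map intertwines the two differentials, and (3) conclude that $\eta_\bullet$ is an isomorphism of complexes and deduce Proposition \ref{prop:resofmd} as a corollary. Step (1) is essentially bookkeeping: a basis tableau of $\ek_{k+1}$ is a hook with first column $i_1 < \cdots < i_k$, first row entries $j_1 \leq \cdots \leq j_d$, subject to $i_k < j_d$; under $\eta_{k+1}$ it goes to $(x_{j_1}\cdots x_{j_d}; i_1,\dotsc,i_k)$. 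Since $\m^d$ is minimally generated by the monomials $x_{j_1}\cdots x_{j_d}$ with $j_1 \leq \cdots \leq j_d$, and for such a monomial $m = x_{j_1}\cdots x_{j_d}$ we have $\max(m) = j_d$, the Eliahou-Kervaire condition $i_1 < \cdots < i_k < \max(m) = j_d$ matches exactly the indexing condition on the tableaux. So $\eta_{k+1}$ is a bijection on bases, hence an isomorphism of free modules.

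The heart of the argument is step (2): showing $d_\ek = \eta_k^{-1}\circ d \circ \eta_{k+1}$, equivalently $\eta_k \circ d_\ek = (\partial - \mu)\circ \eta_{k+1}$. Here one fixes a basis tableau $T$ with column $i_1 < \cdots < i_k$ and row $j_1 \leq \cdots \leq j_d$, writes $m = x_{j_1}\cdots x_{j_d}$, and compares the two expressions term by term. The Eliahou-Kervaire $\partial$ applied to $(m; i_1,\dotsc,i_k)$ produces $\sum_\ell (-1)^\ell x_{i_\ell}(m; i_1,\dotsc,\widehat{i_\ell},\dotsc,i_k)$, which corresponds under $\eta_k^{-1}$ precisely to the first sum $\sum_\ell (-1)^\ell x_{i_\ell}(T\backslash i_\ell)$ in $d_\ek(T)$. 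For the $\mu$-term one must compute $b(m x_{i_\ell})$: since $i_\ell < j_d = \max(m)$, multiplying $m$ by $x_{i_\ell}$ and extracting the beginning removes the largest variable $x_{j_d}$, so $b(m x_{i_\ell}) = m x_{i_\ell}/x_{j_d}$ and $m x_{i_\ell}/b(m x_{i_\ell}) = x_{j_d}$; moreover $b(m x_{i_\ell}) = x_{j_1}\cdots x_{j_{d-1}} x_{i_\ell}$ reorganized into increasing order. One then checks that the basis element $(b(m x_{i_\ell}); i_1,\dotsc,\widehat{i_\ell},\dotsc,i_k)$ corresponds under $\eta_k^{-1}$ to the tableau $(T\backslash\{i_\ell, j_d\})^{i_\ell}$ — that is, delete $i_\ell$ from the column and $j_d$ from the row, then insert $i_\ell$ as a new first-row entry (after $i_1$). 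The sign $(-1)^q$ versus the $(-1)^\ell$ in Definition \ref{def:EKfortableaux} needs care, together with the symmetry convention on the row blocks and the alternating convention on the column stated in the Remark, to confirm that $\mu(m; i_1,\dotsc,i_k)$ maps to the second sum $\sum_\ell (-1)^\ell x_{j_d}(T\backslash\{i_\ell, j_d\})^{i_\ell}$. Matching $\partial - \mu$ against $d_\ek$ then gives the commuting square. The $k=0$ case is immediate: $(m;\,) = (x_{j_1}\cdots x_{j_d};\,)$ maps under the differential to $x_{j_1}\cdots x_{j_d}$, which is exactly $d_\ek$ on the single-row tableau.

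With (1) and (2) established, $\eta_\bullet$ is an isomorphism of complexes, and since $F_{\m^d}$ is a minimal free resolution of $R/\m^d$ by Theorem \ref{thm:EKresBorelfixed} (applied to the Borel ideal $\m^d$), the complex $\ek_\bullet$ of Definition \ref{def:EKfortableaux} is also a minimal free resolution of $R/\m^d$, which is Proposition \ref{prop:resofmd}. The main obstacle I anticipate is entirely in step (2): correctly tracking the interaction between the sign conventions (the $(-1)^\ell$ in $d_\ek$, the $(-1)^q$ in $\partial$ and $\mu$, and the alternating/symmetric conventions on tableaux) and the identification of $b(m x_{i_\ell})$ with the appropriate reorganized tableau. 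Everything else is formal, but getting these signs and the beginning-decomposition bookkeeping exactly right is where the real content lies. A clean way to organize it is to treat the column indices and row indices separately, observing that on row indices $\eta$ is just "forget the ordering and record the monomial," while on column indices it is the identity — so the only subtlety is the single index $i_\ell$ that migrates from column to row in the $\mu$-term, and the resulting sign when that tableau is straightened back into the chosen basis via $(T\backslash j_k)^{j_k} = T$ from Notation \ref{not:tableauxnotation}.
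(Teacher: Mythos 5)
Your proposal is correct and follows essentially the same route as the paper: observe that $\eta_{k+1}$ is a bijection on the stated bases, and reduce the commutation of the differentials to the single computation $b(x_{j_1}\cdots x_{j_d}\cdot x_{i_\ell}) = x_{i_\ell}x_{j_1}\cdots x_{j_{d-1}}$, which identifies the $\mu$-term of the Eliahou--Kervaire differential with the second sum in $d_\ek$. The paper's proof is just a terser version of the same argument, leaving the sign bookkeeping you flag to the reader.
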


\begin{proof}
The map $\eta_k$ is clearly an isomorphism for each $k$. To see that this is an isomorphism of complexes, simply observe that any minimal generator of $\m^d$ is of the form $x_{q_1} \cdots x_{q_d}$ for $q_1 \leq \cdots \leq q_d$, so that for any $j < q_d$,
$$b ( x_{q_1} \cdots x_{q_d} \cdot x_j) = x_j x_{q_1} \cdots x_{q_{d-1}}.$$
\end{proof}

Next, we need to introduce some necessary definitions in order to define the DG structure on the Eliahou-Kervaire resolution.

\begin{notation}
Let $R= k[x_1 , \dotsc , x_n]$, where $k$ is a field. Given a vector $\alpha = (\alpha_1 , \dotsc , \alpha_n) \in \bbz_{\geq 0}^n$, use the notation
$$x^\alpha := x_1^{\alpha_1} \cdots x_n^{\alpha_n}.$$
The notation $\epsilon_i$ will denote the appropriately sized vector will a $1$ in the $i$th entry and $0$'s elsewhere.
\end{notation}

\begin{definition}\label{def:meetandchains}
Given two minimal generators $x^\alpha$, $x^\beta$ of $(x_1 , \dotsc , x_n)^d$, write
$$x_{s_1} \cdots x_{s_a} = \frac{b(\lcm (x^\alpha,x^\beta))}{\gcd (b(\lcm (x^\alpha,x^\beta),x^\alpha)},$$
with $s_1 \leq \cdots \leq s_a$ and define the sequence of monomials $f_0 , f_1 , \dotsc , f_a$ via $f_0 = x^\alpha$, $f_{i+1} = b(x_{s_{i+1}} f_i)$. 

Likewise, write 
$$x_{t_1} \cdots x_{t_b} = \frac{b(\lcm (x^\alpha,x^\beta))}{\gcd (b(\lcm (x^\alpha,x^\beta),x^\beta)},$$
with $t_1 \leq \cdots \leq t_b$ and define the sequence of monomials $g_0 , g_1 , \dotsc , g_b$ via $g_0 = x^\beta$, $g_{i+1} = b(x_{s_{i+1}} g_i)$. 
\end{definition}

The following Theorem is due to Peeva \cite{peeva19960}. Combining Theorem \ref{thm:EKisDGPeeva} with the isomorphism of Proposition \ref{prop:EKandTabiso} induces a DG algebra structure on the complexes $\ek_\bullet$ of Definition \ref{def:EKfortableaux}.

\begin{definition}\label{def:DGstructureonEK}
Let $I := (x_1 , \dotsc , x_n)^d \subseteq R := k[x_1 , \dotsc , x_n]$, and employ the notation of Definition \ref{def:meetandchains} for the monomials $f$ and $g$. Define a product $(F_I)_i \otimes (F_I)_j \to (F_I)_{i+j}$ via the following formulas:
\begingroup\allowdisplaybreaks
\begin{align*}
    &1.  \qquad (f ; J) \cdot (f ; K) = 0, \\
    &2.  \qquad (f ; J) \cdot (g ; K ) = 0 \ \textrm{if there exist} \ s_p \in J \ \textrm{and} \ t_q \in K, \\
    &3. \qquad (f; J) \cdot (g ; K) = \sum_{i<p} \frac{fg}{f_i f_{i+1}} (f_i ; J) \cdot (f_{i+1} ; K) \\
    &\qquad \qquad \qquad \textrm{if} \ s_{p+1} \in J, \ s_\ell \notin J \ \textrm{for} \ \ell \leq p, \ \textrm{and} \ t_i \notin K \ \textrm{for all} \ i, \\
    &4. \qquad (f; J) \cdot (g ; K) = \sum_{i<q} \frac{fg}{g_i g_{i+1}} (g_i ; J) \cdot (g_{i+1} ; K) \\
    &\qquad \qquad \qquad \textrm{if} \ t_{q+1} \in K, \ t_\ell \notin K \ \textrm{for} \ \ell \leq q, \ \textrm{and} \ s_i \notin J \ \textrm{for all} \ i, \\
    &5. \qquad (f ; J) \cdot (g ; K) = \sum_{i} \frac{fg}{f_i f_{i+1}} (f_i ; J) \cdot (f_{i+1} ; K) \\
    &\qquad\qquad\qquad\qquad + \sum_{i} \frac{fg}{g_i g_{i+1}} (g_i ; J) \cdot (g_{i+1} ; K)  \ \textrm{otherwise,} \\
    &\textrm{where} \ (f_i ; J) \cdot (f_{i+1} ; K) = \begin{cases} 
    0 \qquad \textrm{if} \ s_{i+1} \leq \max (K) \\
    \frac{f_{i+1}}{s_{i+1}} (f_i ; J,s_{i+1} , K) \qquad \textrm{if} \ s > \max (K). \\
    \end{cases} \\
\end{align*}
\endgroup
\end{definition}

\begin{theorem}[{\cite[Theorem 1.1]{peeva19960}}]\label{thm:EKisDGPeeva}
Let $I$ be a Borel-fixed ideal. Then the minimal free resolution $F_I$ of $R/I$ admits the structure of a commutative associative DG algebra, where the product is given by Definition \ref{def:DGstructureonEK}.
\end{theorem}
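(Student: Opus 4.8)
The final statement is Theorem~\ref{thm:EKisDGPeeva}, due to Peeva, which asserts that the Eliahou--Kervaire resolution $F_I$ of a Borel-fixed ideal $I$ admits the commutative associative DG-algebra structure given in Definition~\ref{def:DGstructureonEK}. Since this is attributed verbatim to \cite{peeva19960}, in this paper it would be cited rather than reproved; but a proof proposal for the underlying statement is as follows.

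\medskip

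The plan is to verify, one by one, the four axioms (a)--(d) of Definition~\ref{dg} for the product on $F_I$ defined in Definition~\ref{def:DGstructureonEK}. First I would check the grading axiom (a), which is immediate from the shape of the formulas: each term $\frac{fg}{f_if_{i+1}}(f_i;J)\cdot(f_{i+1};K)$ lives in homological degree $|J|+|K|+2$ after one applies the base case, matching the degree of $(f;J)$ times $(g;K)$. Next I would establish the Leibniz rule (b); this is the first genuinely computational step and I would do it by induction on homological degree, using the recursive structure built into the formulas (each product $(f;J)\cdot(g;K)$ in positive degree is expressed via products $(f_i;J)\cdot(f_{i+1};K)$ with the $f_i$ of strictly smaller degree in the relevant sense). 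One expands $d\bigl((f;J)\cdot(g;K)\bigr)$ using $d=\partial-\mu$ and the telescoping behaviour of the chain $f_0,\dots,f_a$ — the key identity being that $f_{i+1}=b(x_{s_{i+1}}f_i)$ forces the ``$\mu$'' contributions to cancel in pairs along the chain, leaving exactly $d(f;J)\cdot(g;K)\pm(f;J)\cdot d(g;K)$.

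\medskip

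The graded-commutativity axiom (c) is where the main obstacle lies: the defining formulas for $(f;J)\cdot(g;K)$ are manifestly asymmetric in the two arguments (cases 3 and 4 privilege $J$ versus $K$ respectively), so proving $(f;J)\cdot(g;K)=(-1)^{ij}(g;K)\cdot(f;J)$ requires a careful analysis of how the two chains $f_0,\dots,f_a$ and $g_0,\dots,g_b$ — both built from the common monomial $b(\lcm(x^\alpha,x^\beta))$ — interact. I expect one must show that the recursion, when unwound from the other side, produces the same linear combination of basis elements up to the expected sign; this is the combinatorial heart of the argument and where an inductive bookkeeping on $|J|+|K|$ together with a case analysis matching cases 3/4/5 against their mirror images is unavoidable. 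Axiom (d), $x_i^2=0$ for $i$ odd, then follows quickly: case~1 already gives $(f;J)\cdot(f;J)=0$, and for a general odd-degree element one combines this with graded-commutativity (which for $i$ odd gives $x_ix_i=-x_ix_i$, hence $2x_i^2=0$, with the characteristic-free conclusion coming directly from case~1 applied after expanding in the basis).

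\medskip

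Finally, I would note that once (a)--(d) are verified for $F_I$, the induced structure on $\ek_\bullet$ of Definition~\ref{def:EKfortableaux} requires nothing further: the isomorphism of complexes $\eta_\bullet$ of Proposition~\ref{prop:EKandTabiso} transports the product, and all of (a)--(d) are preserved under an isomorphism of complexes, so the DG-algebra axioms hold on $\ek_\bullet$ automatically. The main obstacle throughout is the asymmetry of the defining formulas; everything else is a structured induction exploiting the telescoping chains $f_i$ and $g_i$.
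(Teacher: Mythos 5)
The paper does not prove this statement at all: it is quoted verbatim from \cite{peeva19960} (Theorem 1.1 there) and used as a black box, so there is no internal argument to compare yours against. Judged on its own terms, your outline has one genuine gap and one structural weakness. The gap is associativity. The theorem asserts a \emph{commutative associative} DG algebra, and in Definition \ref{dg} associativity sits in the preamble (``a unitary, associative multiplication'') rather than among the listed axioms (a)--(d); your plan is explicitly ``to verify, one by one, the four axioms (a)--(d)'' and never returns to associativity. This is not a technicality: as the introduction of this paper stresses, the product on a minimal free resolution is always associative up to homotopy but usually not on the nose, so associativity of Peeva's product is precisely the substantive content of the theorem and the part of \cite{peeva19960} that requires the most work. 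A proof proposal that omits it has skipped the main claim.

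The structural weakness is that the two computations you do identify as hard --- the Leibniz rule via cancellation along the telescoping chains $f_0,\dotsc,f_a$, and graded commutativity despite the asymmetry of cases 3 and 4 --- are described (``I would do it by induction,'' ``I expect one must show \dots'') rather than carried out, so what you have is a plausible roadmap of Peeva's argument, not a proof. Two smaller points: you should also verify that the recursion in cases 3--5 terminates and that the coefficients $\tfrac{fg}{f_i f_{i+1}}$ and $\tfrac{f_{i+1}}{x_{s_{i+1}}}$ are genuinely polynomial, so that the product is well defined over $R$; and your reduction of axiom (d) to case 1 plus commutativity is fine, including in characteristic $2$, since the diagonal terms vanish by case 1 and the cross terms cancel in pairs.
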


The following Proposition is an immediate consequence of \cite[Proposition 2.8]{peeva19960} and will be very useful for the proof of Lemma \ref{lem:dgFor1n}.

\begin{prop}\label{prop:EKDGprod}
Adopt notation and hypotheses as in Definition \ref{def:EKfortableaux}.
Let $1 \leq p_1 \leq \cdots \leq p_d \leq n$ and $1 \leq q_1 \leq \cdots \leq q_d \leq n$, with $i_1 < \cdots i_k < q_d$. Define
$$T = \ytableausetup
{boxsize=1.5em}
\begin{ytableau}
p_1 & p_2 &\cdots & p_d \\
\end{ytableau},$$
$$T' = \ytableausetup
{boxsize=1.5em}
\begin{ytableau}
i_1 & q_1 & q_2 &\cdots & q_d \\
\vdots \\
i_k
\end{ytableau},$$
and assume $p_1 \leq q_1$. Then,
$$T \cdot T' = - x_{p_2} \cdots x_{p_d} (T')_{p_1} + x_{q_1} T \cdot (T' \backslash q_1)^{p_1}.$$
\end{prop}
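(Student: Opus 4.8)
The plan is to transport the computation, through the isomorphism of complexes $\eta$ of Proposition \ref{prop:EKandTabiso}, into the original Eliahou--Kervaire resolution $F_{\m^d}$, where the product is the explicit one of Definition \ref{def:DGstructureonEK}, and there to invoke Peeva's \cite[Proposition 2.8]{peeva19960}. Under $\eta$, the tableau $T$ corresponds to the degree-one generator $(f;\varnothing)$ with $f=x_{p_1}\cdots x_{p_d}$, and $T'$ to the basis element $(g;i_1,\dots,i_k)$ with $g=x_{q_1}\cdots x_{q_d}$; since $q_1\le\cdots\le q_d$ we have $\max(g)=q_d$, so $i_1<\cdots<i_k<q_d$ is exactly the admissibility requirement $i_k<\max(g)$, while $p_1\le q_1$ reads $\min(f)\le\min(g)$. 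As the DG structure on $\ek_\bullet$ is by definition the one pulled back along $\eta$, it suffices to evaluate $(f;\varnothing)\cdot(g;i_1,\dots,i_k)$ and translate back, and the two basis elements appearing on the right-hand side are, by Notation \ref{not:tableauxnotation}, $\eta\big((T')_{p_1}\big)=(g;p_1,i_1,\dots,i_k)$ and $\eta\big((T'\backslash q_1)^{p_1}\big)=(x_{p_1}x_{q_2}\cdots x_{q_d};i_1,\dots,i_k)$.

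The core of the argument is then the evaluation of $(f;\varnothing)\cdot(g;i_1,\dots,i_k)$ via Definition \ref{def:DGstructureonEK} together with \cite[Proposition 2.8]{peeva19960}. I would first work out the combinatorial data of Definition \ref{def:meetandchains} attached to the pair $(f,g)$: the monomial $b(\lcm(f,g))$, the monomials $x_{s_1}\cdots x_{s_a}$ and $x_{t_1}\cdots x_{t_b}$, and the chains $f_0,\dots,f_a$ and $g_0,\dots,g_b$. Using $p_1=\min(f)\le\min(g)=q_1\le q_2\le\cdots\le q_d$, one shows that the terms of Definition \ref{def:DGstructureonEK} reorganize into just two: a single ``boundary'' term in which $p_1$ is adjoined to the index list $\{i_1,\dots,i_k\}$, and a single ``recursive'' term in which $g$ is replaced by $g':=x_{p_1}x_{q_2}\cdots x_{q_d}$ and the product $(f;\varnothing)\cdot(g';i_1,\dots,i_k)$ is taken again; the accompanying coefficients work out to $-x_{p_2}\cdots x_{p_d}=-f/x_{p_1}$ on the boundary term (the sign being that of $-\mu$ in the differential $d=\partial-\mu$) and to $x_{q_1}$ on the recursive term (recording the identity $x_{q_1}\cdot g'=x_{p_1}\cdot g$). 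Reassembling and translating back through $\eta$ via the dictionary of the previous paragraph yields precisely $T\cdot T'=-x_{p_2}\cdots x_{p_d}(T')_{p_1}+x_{q_1}\,T\cdot(T'\backslash q_1)^{p_1}$.

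The main obstacle I anticipate is the bookkeeping in that middle step: correctly identifying $b(\lcm(f,g))$ and the chains $f_\bullet$, $g_\bullet$ from the sole hypothesis $\min(f)\le\min(g)$, checking that every term of Definition \ref{def:DGstructureonEK} other than the two advertised ones either vanishes or does not occur, and pinning down the signs and the coefficient $x_{q_1}$. A secondary point to watch is that $(T')_{p_1}$ and $(T'\backslash q_1)^{p_1}$ need not already be in standard (alternating/semistandard) form, so one must apply the conventions recorded in Notation \ref{not:tableauxnotation} and the straightening relations of Observation \ref{obs:straighteningrelations} consistently on both sides of the identity; once the translation is set up correctly, the matching is formal and the statement is, as claimed, an immediate consequence of \cite[Proposition 2.8]{peeva19960}.
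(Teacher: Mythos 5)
Your proposal follows essentially the same route as the paper, which offers no proof beyond the remark that the statement ``is an immediate consequence of \cite[Proposition 2.8]{peeva19960}'': you transport the computation through $\eta$ of Proposition \ref{prop:EKandTabiso} and invoke the same result of Peeva. Your translation dictionary $\eta\big((T')_{p_1}\big)=(g;p_1,i_1,\dots,i_k)$ and $\eta\big((T'\backslash q_1)^{p_1}\big)=(x_{p_1}x_{q_2}\cdots x_{q_d};i_1,\dots,i_k)$ is correct, and you in fact supply more of the bookkeeping than the paper does.
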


\section{An Isomorphism of Complexes Between $\ek_\bullet$ and $L ( \psi , b)$}\label{sec:isoEKandL}

In this section, we construct an explicit isomorphism of complexes $\ek_\bullet \to L(\psi ,  b)$. This isomorphism will end up being the explicit isomorphism of algebras of Theorem \ref{thm:theyareSame}, but we will wait until Section \ref{sec:theSame} to prove this. As it turns out, the desired isomorphism of complexes is quite natural. Consider a tableau $T \in \ek_{k+1}$, so that 
$$T = \ytableausetup
{boxsize=1.5em}
\begin{ytableau}
i_1 & j_1 &\cdots & j_{d} \\
i_2 \\
\vdots \\
i_{k} \\
\end{ytableau},$$
where $i_1 < \cdots < i_k < j_d$ and $j_1 \leq \cdots \leq j_d$. Then, the tableau 
$$T' = \ytableausetup
{boxsize=1.85em}
\begin{ytableau}
i_1 & j_1 &\cdots & j_{d-1} \\
i_2 \\
\vdots \\
i_k \\
j_d \\
\end{ytableau}$$
will represent a tableau in $L_d^k (F)$; indeed, the following Proposition shows that this candidate map is precisely the isomorphism we are looking for.

\begin{prop}\label{prop:EKandLIso}
Adopt notation and hypotheses of Setup \ref{set:Lcomplexsetup} with $R = k[x_1 , \dotsc , x_n]$ and $\im (\psi) = (x_1 , \dotsc , x_n)$. For each $k \geq 1$, the $R$-module homomorphisms
\begingroup\allowdisplaybreaks
\begin{align*}
    \phi_k : \ek_k &\to L(\psi , d)_k \\
    T &\mapsto  (-1)^{k-1} (T \backslash j_d)_{j_d} \\
\end{align*}
\endgroup
is an isomorphism.
\end{prop}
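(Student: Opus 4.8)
The plan is to show that each $\phi_k$ is a bijective homomorphism of free $R$-modules by writing its matrix in the standard tableau bases and checking that, after a suitable reindexing of the two bases, this matrix is triangular with nonzero scalars on the diagonal. Throughout I abbreviate an EK basis tableau of $\ek_k$ as $(i_1<\cdots<i_{k-1}<j_d\mid j_1\le\cdots\le j_d)$ — first column $i_1,\dots,i_{k-1}$, first row $i_1,j_1,\dots,j_d$ (Definition \ref{def:EKfortableaux}) — and a standard basis tableau of $L(\psi,d)_k=L_d^{k-1}(F)$ as $(a_0<\cdots<a_{k-1}\mid c_1\le\cdots\le c_{d-1})$ with $a_0\le c_1$, having first column $a_0,\dots,a_{k-1}$ and arm $c_1,\dots,c_{d-1}$ (Proposition \ref{prop:standardbasis}).

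First I compute $\phi_k$ on an EK basis tableau $T$. Using the rule $(U\backslash i_k)_{i_k}=(-1)^{k-1}U$ of Notation \ref{not:tableauxnotation} to move the top entry of the first column to its bottom, one checks that $\phi_k(T)=(-1)^{k-1}(T\backslash j_d)_{j_d}$ is represented in $L_d^{k-1}(F)$ by the filling $Y_T$ with first column $i_1<\cdots<i_{k-1}<j_d$ and arm $j_1\le\cdots\le j_{d-1}$; that is, one simply moves $j_d$ to the bottom of the first column. Since this column is already strictly increasing, $Y_T$ is semistandard precisely when $i_1\le j_1$.

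Now split into cases. If $i_1\le j_1$ then $\phi_k(T)=Y_T=:U_T$ is itself a standard basis vector, with last column entry $j_d\ge j_{d-1}=$ its last arm entry. If $i_1>j_1$ then, since also $i_1<\cdots<i_{k-1}<j_d$, we are exactly in the hypothesis of Observation \ref{obs:straighteningrelations}, which expands $Y_T$ as a signed sum of semistandard tableaux: the term that sends $j_d$ into the arm is, after reordering the (symmetric) arm, the standard tableau $V_T:=(j_1<i_1<\cdots<i_{k-1}\mid j_2\le\cdots\le j_d)$ with coefficient $(-1)^{k-1}$, while each remaining term keeps $j_d$ in the first column and has first-column content equal to that of $V_T$ with one $i_m$ replaced by the strictly larger $j_d$. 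Fixing the colex order on $k$-subsets of $\{1,\dots,n\}$, all non-leading terms therefore have strictly larger first-column content than $V_T$; note also that $V_T$ has last column entry $i_{k-1}<j_d=$ its last arm entry. Consequently $\beta\colon T\mapsto U_T$ (if $i_1\le j_1$) or $V_T$ (if $i_1>j_1$) is a bijection from the EK basis of $\ek_k$ onto the standard basis of $L_d^{k-1}(F)$: the two cases have complementary images — standard tableaux with last column entry $\ge$, respectively $<$, the last arm entry — and on each piece $\beta$ has the explicit inverse sending $(a_0<\cdots<a_{k-1}\mid c_1\le\cdots\le c_{d-1})$ to $(a_0<\cdots<a_{k-1}\mid c_1\le\cdots\le c_{d-1}\le a_{k-1})$ when $a_{k-1}\ge c_{d-1}$ and to $(a_1<\cdots<a_{k-1}<c_{d-1}\mid a_0\le c_1\le\cdots\le c_{d-1})$ when $a_{k-1}<c_{d-1}$.

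Finally, order both bases by pulling back, via $\beta$, any total order on the standard basis of $L_d^{k-1}(F)$ refining the colex order on first-column contents. The case analysis above then says exactly that in these bases the matrix of $\phi_k$ is triangular with every diagonal entry equal to $1$ or $(-1)^{k-1}$; hence its determinant is a nonzero scalar and $\phi_k$ is an isomorphism. I expect the straightening step in the case $i_1>j_1$ — running Observation \ref{obs:straighteningrelations} on $Y_T$, identifying $V_T$ and the signs, and verifying that $\beta$ is a bijection — to be the main obstacle; the rest is bookkeeping. (One can alternatively combine this with a direct check that $\phi_\bullet$ commutes with the differentials, for which Observation \ref{obs:strtrewrite} is the natural tool to match the $\mu$-part of $d_\ek$ against the Koszul differential on the extra column slot, and then deduce that $\phi_\bullet$ is an isomorphism of complexes from the fact that $\ek_\bullet$ and $L(\psi,d)$ are both minimal free resolutions of $R/(x_1,\dots,x_n)^d$, by Propositions \ref{prop:resofmd} and \ref{prop:resnofpower}.)
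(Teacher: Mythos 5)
Your proof is correct, but it takes a genuinely different route from the paper's. The paper only proves surjectivity: for each standard basis tableau of $L_d^{k-1}(F)$ it exhibits an explicit preimage (a single EK basis element when the bottom column entry dominates the last arm entry, and otherwise a signed sum of EK basis elements obtained from Observation \ref{obs:straighteningrelations}), and then concludes with the observation that a surjection between free modules of the same rank is an isomorphism --- the rank equality being available because $\ek_\bullet$ and $L(\psi,d)$ are both minimal free resolutions of $R/(x_1,\dotsc,x_n)^d$. You instead compute $\phi_k$ on each EK basis element, straighten once, and organize the result as a triangular matrix with diagonal entries $\pm 1$ relative to your bijection $\beta$ and a colex-refining order; this yields injectivity and surjectivity simultaneously and does not presuppose that the two modules have equal rank (indeed it reproves that fact). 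The key point your argument rests on --- that a single application of Observation \ref{obs:straighteningrelations} to $Y_T$ already produces semistandard tableaux, because the column of $Y_T$ is strictly increasing and the arm is symmetric, so the only defect is at the corner --- does hold, and your sign bookkeeping ($\phi_k(T)=Y_T$, leading coefficient $(-1)^{k-1}$ on $V_T$) is consistent with the conventions of Notation \ref{not:tableauxnotation}. The paper's argument is shorter; yours is more self-contained and makes the basis correspondence explicit, which is in fact the same correspondence the paper exploits implicitly when writing down preimages. Your closing parenthetical (commuting with differentials plus uniqueness of minimal free resolutions) is essentially what the paper does next in Proposition \ref{prop:isoofEKandL}, but there the isomorphism statement is quoted from this Proposition rather than rederived.
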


\begin{proof}
It suffices to show that each $\phi_k$ is a surjection, since any surjection between free modules of the same rank must be an isomorphism. Let 
$$T=\ytableausetup
{boxsize=2em}
\begin{ytableau}
i_1 & j_1 &\cdots & j_{d-1} \\
i_2 \\
\vdots \\
i_k \\
\end{ytableau}$$
be a standard tableau, so that $i_1 < \cdots < i_{k}$ and $i_1 \leq j_1 \leq \cdots \leq j_{d-1}$. If $i_k \geq j_{d-1}$, then observe that $(T \backslash i_k)^{i_k}$ is a basis element of $\ek_k$, and $\phi_k ( (T \backslash i_k)^{i_k}) =  T$. 

Assume now that $i_k < j_{d-1}$. By the relation of Observation \ref{obs:straighteningrelations}, we may write
$$T = \sum_{\ell=1}^k (-1)^{\ell} (T \backslash \{ i_\ell , j_{d-1} \} )^{i_\ell}_{j_{d-1}}.$$ 
Observe that by assumption, $(T \backslash i_\ell)^{i_\ell}$ is a basis element for $\ek_k$ for all $0 \leq \ell \leq k$. Moreover, by the above equality, 
$$\phi_k (\sum_{\ell=1}^k (-1)^{\ell+k} (T \backslash i_\ell)^{i_\ell} ) =  T,$$
so the result follows.
\end{proof}

\begin{prop}\label{prop:isoofEKandL}
Adopt notation and hypotheses of Setup \ref{set:Lcomplexsetup} with $R = k[x_1 , \dotsc , x_n]$ and $\im (\psi) = (x_1 , \dotsc , x_n)$. If $\phi_0 = \id : R \to R$, then the maps
$$\phi_\bullet : \ek_\bullet \to L ( \psi , d)$$
of Proposition \ref{prop:EKandLIso} form an isomorphism of complexes, extending the identity in homological degree $0$.
\end{prop}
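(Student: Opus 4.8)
The plan is to verify that the square
\[
\xymatrix{
\ek_{k} \ar[r]^-{d_\ek} \ar[d]_-{\phi_k} & \ek_{k-1} \ar[d]^-{\phi_{k-1}} \\
L(\psi,d)_{k} \ar[r]^-{\kos^\psi \otimes 1} & L(\psi,d)_{k-1}
}
\]
commutes for every $k\geq 1$, and separately that it commutes in the bottom degrees, i.e. $\phi_0 \circ d_\ek = S_d(\psi)\circ \phi_1$ on $\ek_1$. Since each $\phi_k$ is an isomorphism by Proposition \ref{prop:EKandLIso}, this suffices. I would treat the three cases $k\geq 2$, $k=1$, and (the trivial observation that) $k=0$ is just the identity, separately.

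The main computation is the case $k\geq 2$. Fix a basis tableau
\[
T=\ytableausetup{boxsize=1.4em}\begin{ytableau}
i_1 & j_1 &\cdots & j_{d} \\
i_2 \\
\vdots \\
i_{k-1}
\end{ytableau}\in\ek_k,
\]
with $i_1<\cdots<i_{k-1}<j_d$ and $j_1\leq\cdots\leq j_d$. Using the formula for $d_\ek$ from Definition \ref{def:EKfortableaux} and then applying $\phi_{k-1}$, one expands
\[
\phi_{k-1}(d_\ek(T)) = (-1)^{k-2}\Bigl(\sum_{\ell=1}^{k-1}(-1)^\ell x_{i_\ell}\bigl((T\backslash\{i_\ell,j_d\})_{j_d}\bigr) - \sum_{\ell=1}^{k-1}(-1)^\ell x_{j_d}\bigl((T\backslash\{i_\ell,j_d\})^{i_\ell}\bigr)_{j_d}\Bigr),
\]
being careful that $\phi_{k-1}$ of a tableau in $\ek_{k-1}$ means "move $j_d$ to the bottom of the first column, with sign $(-1)^{k-2}$". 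Going the other way, $\phi_k(T) = (-1)^{k-1}(T\backslash j_d)_{j_d}$, which is the tableau in $L^{k-1}_d(F)$ with first column $j_d,i_1,\dots,i_{k-1}$ and first row $j_d,j_1,\dots,j_{d-1}$; now apply the differential formula from the Observation following Notation \ref{not:tableauxnotation}, $(\kos^\psi\otimes 1)(S)=\sum_{m}(-1)^m x_{(\cdot)_m}(S\backslash(\cdot)_m)$ summed over the entries of the first column of $S$. The key point is that the $\kos^\psi\otimes 1$ differential on $\phi_k(T)$ has one extra term, coming from deleting the topmost entry $j_d$ of the first column; this term is $x_{j_d}$ times the tableau with first column $i_1,\dots,i_{k-1}$ and first row $j_d,j_1,\dots,j_{d-1}$ — which is not semistandard — and the straightening relation of Observation \ref{obs:strtrewrite} (or \ref{obs:straighteningrelations}) is exactly what is needed to rewrite it as a combination of the "$x_{j_d}$-terms" appearing in $\phi_{k-1}(d_\ek(T))$. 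The remaining terms, from deleting $i_1,\dots,i_{k-1}$ from the first column of $\phi_k(T)$, match the "$x_{i_\ell}$-terms" on the other side after accounting for the sign shift between $(T\backslash\{i_\ell,j_d\})_{j_d}$ (with $j_d$ reinserted at position $\ell$ of a shorter column) and the corresponding tableau appearing in $(\kos^\psi\otimes 1)(\phi_k(T))$. So the whole verification reduces to a careful sign bookkeeping plus one application of the straightening relations.

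For $k=1$: a basis element of $\ek_1$ is a single row $\begin{ytableau} j_1 & \cdots & j_d\end{ytableau}$ with $j_1\leq\cdots\leq j_d$, and $\phi_1$ sends it to the column tableau $(j_d,j_1,\dots,j_{d-1})^{T}$-shaped element... more precisely to $(T\backslash j_d)_{j_d}$, which has first column $j_d$ alone over... wait, with $k=1$ the target $L(\psi,d)_1 = L^0_d(F) = S_d(F)$, so $\phi_1(\begin{ytableau} j_1 & \cdots & j_d\end{ytableau})$ is the degree-$d$ monomial symbol $x_{j_1}\cdots x_{j_d}$ up to sign, and $S_d(\psi)$ sends that to $x_{j_1}\cdots x_{j_d}\in R$, which agrees with $d_\ek$ of the row by Definition \ref{def:EKfortableaux}; the sign $(-1)^{1-1}=1$ causes no trouble. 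This establishes commutativity in the lowest degree.

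The main obstacle I anticipate is the sign bookkeeping together with correctly interpreting the shorthand operations $T\backslash i_\ell$, $T^{i_\ell}$, $T_s$ when $T$ already has had an entry removed or when the result is non-semistandard and must be straightened: one must track both the Koszul-differential signs $(-1)^m$ indexed by position in the first column, the signs $(-1)^{k-1}$ built into the $\phi_k$, and the signs generated by the straightening relation of Observation \ref{obs:strtrewrite}. I would organize the proof by writing $\phi_k(T)$ explicitly as a tableau, splitting $(\kos^\psi\otimes 1)(\phi_k(T))$ into the "$j_d$-term" and the "$i_\ell$-terms", handling the $i_\ell$-terms by a direct sign comparison and the $j_d$-term via Observation \ref{obs:strtrewrite}, and then matching term-by-term with the expansion of $\phi_{k-1}(d_\ek(T))$. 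Since Proposition \ref{prop:EKandLIso} already gives bijectivity, no further argument is needed once the square commutes.
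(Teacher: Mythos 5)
Your plan is correct and follows essentially the same route as the paper's proof: reduce to commutativity of each square (bijectivity already being Proposition \ref{prop:EKandLIso}), expand both compositions on a basis tableau, and use the straightening relation of Observation \ref{obs:strtrewrite} to reconcile the extra corner term produced by the Koszul differential with the $x_{j_d}$-terms of $d_\ek$. One small slip worth fixing when you carry out the bookkeeping: in your displayed expansion of $\phi_{k-1}(d_\ek(T))$, the second sum should lower $j_{d-1}$ (the largest remaining entry of the symmetric part of $(T\backslash\{i_\ell,j_d\})^{i_\ell}$, since $j_d$ has been removed) into the column, not $j_d$ — but this does not affect the validity of the outline.
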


\begin{proof}
In view of Proposition \ref{prop:EKandLIso}, it suffices to show that each $\phi_\bullet$ is a morphism of complexes; that is, for each $k$ the diagram
$$\xymatrix{\ek_{k+1} \ar[d]^-{\phi_{k+1}} \ar[r]^-{d_\ek} & \ek_{k} \ar[d]^-{\phi_{k}} \\
L(\psi , d)_{k+1} \ar[r]^{d_L} & L(\psi , d)_k \\}$$
commutes, where $d_L := \kos^\psi \otimes 1$. For $k=0$, this is trivial, so assume $k \geq 1$. Going clockwise around the above diagram:
\begingroup\allowdisplaybreaks
\begin{align*}
    T &\mapsto \sum_{\ell=1}^k (-1)^\ell x_{i_\ell} (T \backslash i_\ell) - \sum_{\ell=1}^k (-1)^\ell x_{j_d} (T \backslash \{ i_\ell , j_d \} )^{i_\ell} \\
    &\mapsto (-1)^{k-1} \sum_{\ell=1}^k (-1)^\ell x_{i_\ell} (T \backslash \{i_\ell , j_d \})_{j_d} - (-1)^{k-1} \sum_{\ell=1}^k (-1)^\ell x_{j_d} (T \backslash \{ i_\ell ,j_{d-1}, j_d \} )^{i_\ell}_{j_{d-1}} \\
\end{align*}
\endgroup
By Observation \ref{obs:strtrewrite}, 
$$(T \backslash \{ i_k , j_{d-1}, j_d \})^{i_k}_{j_{d-1}} = \sum_{\ell=0}^{k-1} (-1)^{k-\ell-1} (T \backslash \{ i_\ell , j_{d-1}, j_d \})^{i_\ell}_{j_{d-1}} + (-1)^k (T \backslash  j_d ),$$
whence
\begingroup\allowdisplaybreaks
\begin{align*}
    \sum_{\ell=1}^k (-1)^\ell x_{j_d} (T \backslash \{ i_\ell ,j_{d-1}, j_d \} )^{i_\ell}_{j_{d-1}} &= \sum_{\ell=1}^{k-1} (-1)^\ell x_{j_d} (T \backslash \{ i_\ell ,j_{d-1}, j_d \} )^{i_\ell}_{j_{d-1}} \\
    &+ (-1)^k x_{j_d} \sum_{\ell=0}^{k-1} (-1)^{k-\ell-1} (T \backslash \{ i_\ell , j_{d-1}, j_d \})^{i_\ell}_{j_{d-1}} \\
    & + x_{j_d} (T \backslash  j_d ) \\
    &=  x_{j_d} (T \backslash  j_d ). \\
\end{align*}
\endgroup
Substituting, this implies
\begingroup\allowdisplaybreaks
\begin{align*}
    &(-1)^{k-1} \sum_{\ell=1}^k (-1)^\ell x_{i_\ell} (T \backslash \{i_\ell , j_d \})_{j_d} - (-1)^{k-1} \sum_{\ell=1}^k (-1)^\ell x_{j_d} (T \backslash \{ i_\ell ,j_{d-1}, j_d \} )^{i_\ell}_{j_{d-1}} \\
    =& (-1)^{k-1} \sum_{\ell=1}^k (-1)^\ell x_{i_\ell} (T \backslash \{i_\ell , j_d \})_{j_d} +(-1)^k (T \backslash j_d). \\
\end{align*}
\endgroup
Moving counterclockwise around the diagram,
\begingroup\allowdisplaybreaks
\begin{align*}
    T &\mapsto (-1)^{k} (T \backslash j_d)_{j_d} \\
    &\mapsto (-1)^{k}\sum_{\ell=1}^k (-1)^{\ell+1} x_{i_\ell} (T \backslash \{ i_\ell , j_d \} )_{j_d} + (-1)^k x_{j_d} (T \backslash j_d). \\
\end{align*}
\endgroup
\end{proof}

\section{The Map $\phi$ is an Isomorphism of Algebras}\label{sec:theSame}

In this section, we prove that the map $\phi$ of Proposition \ref{prop:isoofEKandL} is an isomorphism of algebras with respect to the products of Definitions \ref{def:LDGprod} and \ref{def:DGstructureonEK}. The proof follows by first establishing that $\phi$ commutes with any product of the form $T \cdot T'$, where $T$ is an element of homological degree $1$. Since the product of elements of general homological degree can be reduced to products of the above form, the general case will follow immediately from this. To begin, we observe the following, which is immediate from the definitions referenced above:

\begin{obs}\label{obs:vanishingProd}
Let $$T = \ytableausetup
{boxsize=1.5em}
\begin{ytableau}
p_1 & p_2 &\cdots & p_d \\
\end{ytableau},\qquad T' = \ytableausetup
{boxsize=1.5em}
\begin{ytableau}
i_1 & q_2 & q_3 &\cdots & q_d \\
\vdots \\
i_k
\end{ytableau}$$ be tableaux in $L( \psi , d)$ with $p_1 \leq i_1$. Then
$$T \cdot (T')_{p_1} = 0.$$
Similarly, let $$T = \ytableausetup
{boxsize=1.5em}
\begin{ytableau}
p_1 & p_2 &\cdots & p_d \\
\end{ytableau},\qquad T' = \ytableausetup
{boxsize=1.5em}
\begin{ytableau}
i_1 & q_1 & q_2 &\cdots & q_d \\
\vdots \\
i_k
\end{ytableau}$$ be tableaux in $\ek_\bullet$ with $p_1 \leq q_1$. Then
$$T \cdot (T')_{p_1} = 0.$$
\end{obs}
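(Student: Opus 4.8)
The plan is to prove the two displayed identities separately, in each case unwinding the piecewise definition of the relevant product and appealing to the single fact that a hook tableau with a repeated entry in its first column is zero: in $L_d^a(F)$ because that column contributes a factor in $\bigwedge^{a+1}F$, and in $\ek$ because of the alternating--in--the--first--column convention recorded just after Definition \ref{def:EKfortableaux} (equivalently $(m_p ; \dots , j , \dots , j , \dots ) = 0$ in $F_{\m^d}$). Both assertions should then be ``immediate from the definitions'', as claimed.

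For the first identity, set $U := (T')_{p_1}$. Its corner entry is $p_1$, the same as the corner of $T$, and its first entry to the right of the corner is $q_2$; since $T'$ is a (semistandard) basis element of $L(\psi , d)$ we have $i_1 \leq q_2$, hence $p_1 \leq q_2$. Therefore $T \cdot U$ lands in the first case of Definition \ref{def:LDGprod}, giving
$$T \cdot (T')_{p_1} = x_{p_2} \cdots x_{p_d} \, (U)_{p_1} .$$
But $(U)_{p_1}$ has first column $p_1 , p_1 , i_1 , \dots , i_k$, hence is zero, and the identity follows. (If one wants to allow a non-semistandard $T'$, expand it in the standard basis and apply the identity termwise, or run the same argument through the second case of Definition \ref{def:LDGprod}, which terminates for the same reason.)

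For the second identity, I would apply Proposition \ref{prop:EKDGprod} with its ``$T'$'' taken to be $U := (T')_{p_1}$: the corner of $U$ is $p_1$ and its first entry past the corner is $q_1$, so the hypothesis needed there is exactly $p_1 \leq q_1$, yielding
$$T \cdot (T')_{p_1} = - x_{p_2} \cdots x_{p_d} \, (U)_{p_1} + x_{q_1} \, T \cdot \bigl( U \backslash q_1 \bigr)^{p_1} .$$
The first summand vanishes as before. Put $V := (U \backslash q_1)^{p_1}$; then $V$ again has corner $p_1$, and now its first entry past the corner is $p_1$ as well, so a second application of Proposition \ref{prop:EKDGprod} gives $T \cdot V = - x_{p_2} \cdots x_{p_d} (V)_{p_1} + x_{p_1} \, T \cdot (V \backslash p_1)^{p_1}$, in which $(V)_{p_1} = 0$ (repeated column entry) and $(V \backslash p_1)^{p_1} = V$ by the identity $(T \backslash j_k)^{j_k} = T$ of Notation \ref{not:tableauxnotation}. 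Thus $T \cdot V = x_{p_1}\,(T \cdot V)$; since $T \cdot V$ is a homogeneous element of a graded free module, comparing internal degrees forces $T \cdot V = 0$, and therefore $T \cdot (T')_{p_1} = 0$.

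I do not anticipate a real obstacle: the content is bookkeeping. The points requiring care are that the corner/first-entry comparisons genuinely select the branches of Definitions \ref{def:LDGprod} and \ref{def:DGstructureonEK} used above (and that the auxiliary tableaux $U$ and $V$ are honest hook tableaux, so Proposition \ref{prop:EKDGprod} applies to them), and that the ``repeated first-column entry $\Rightarrow 0$'' step is phrased through the exterior-power factor, so that it holds in every characteristic rather than via a $2x = 0$ argument.
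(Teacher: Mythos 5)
Your argument is correct, but it is worth saying up front that the paper offers no proof at all here: the Observation is asserted to be ``immediate from the definitions,'' so there is nothing to compare against except that claim. Your treatment of the first identity is exactly the immediate computation one would hope for: since $T'$ is semistandard, $p_1 \leq i_1 \leq q_2$ puts $T \cdot (T')_{p_1}$ into the first branch of Definition~\ref{def:LDGprod}, and the output $x_{p_2}\cdots x_{p_d}\,((T')_{p_1})_{p_1}$ dies because of the repeated $p_1$ in the exterior factor --- and you are right to route the vanishing through $\bigwedge^{a+1}F$ rather than a $2x=0$ argument. The second identity is genuinely not immediate, and your proof of it is the more interesting contribution: the two successive applications of Proposition~\ref{prop:EKDGprod}, ending in the relation $T\cdot V = x_{p_1}\,T\cdot V$ and the conclusion $T\cdot V=0$ (valid either by internal degree or because $1-x_{p_1}$ is a nonzerodivisor on a free module), is a clean self-contained argument using only tools the paper has already set up. An alternative ``one-line'' route, probably closer to what the author had in mind, is to pass through Definition~\ref{def:DGstructureonEK} directly: $T$ corresponds to $(f;\emptyset)$ and $(T')_{p_1}$ to $\pm(g; p_1, i_1,\dots,i_k)$, and when $p_1 < q_1$ one has $t_1 = p_1 \in K$, so case~4 applies with $q=0$ and the defining sum is empty; your route avoids having to unwind Peeva's chains $f_i, g_i$ at all.

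One bookkeeping point deserves more than the parenthetical you give it: in the second identity the stated hypothesis is only $p_1 \leq q_1$, which does not force $p_1 \leq i_1$, so the corner of $U=(T')_{p_1}$ need not be $p_1$ as you assert --- after straightening the first column, the corner is $\min\{p_1,i_1\}$. This does not break the proof (if $p_1$ equals some $i_\ell$ then $U=0$ by alternation, and otherwise the only facts you actually use are that $p_1$ occurs somewhere in the first column of the sorted $U$, so that $(U)_{p_1}=0$, and that $p_1 \leq q_1 \leq q_2$ keeps the relevant rows weakly increasing), but as written the sentence ``the corner of $U$ is $p_1$'' needs either the extra hypothesis $p_1\leq i_1$ (which holds in every application the paper makes) or the reduction to the sorted representative.
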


The following lemma is essentially the base case of the proof of Theorem \ref{thm:theyareSame}; the proof necessarily splits into multiple cases. Recall that the notation $| \cdot |$ will denote homological degree.

\begin{lemma}\label{lem:dgFor1n}
Let $\phi : \ek_\bullet \to L(\psi , d)$ denote the map of Proposition \ref{prop:EKandLIso}. Then, for any $T, T' \in \ek_\bullet$ with $|T| = 1$, one has
$$\phi (T \cdot T' ) = \phi (T) \cdot \phi (T').$$
\end{lemma}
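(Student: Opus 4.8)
The plan is to verify $\phi(T \cdot T') = \phi(T) \cdot \phi(T')$ by induction on the homological degree $k = |T'|$, since both the Peeva product (via Proposition \ref{prop:EKDGprod}) and the Srinivasan product (via Definition \ref{def:LDGprod}) are defined inductively on this degree when the left factor sits in homological degree $1$. Throughout I write $T = \ytableausetup{boxsize=1.2em}\begin{ytableau} p_1 & \cdots & p_d \end{ytableau}$ and let $T'$ be a hook tableau with first column $i_1 < \cdots < i_k$ and top row $i_1, q_1, \dotsc, q_{d-1}$ (in $\ek_\bullet$), keeping in mind that $\phi$ shifts the bottom-left entry of the hook into the top row, i.e. $\phi(T') = (-1)^{k-1}(T' \backslash j_d)_{j_d}$. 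The key computational inputs are: the explicit form of $\phi_k$ from Proposition \ref{prop:EKandLIso}, the Peeva product formula of Proposition \ref{prop:EKDGprod}, the three-case Srinivasan formula of Definition \ref{def:LDGprod}, the vanishing relations of Observation \ref{obs:vanishingProd}, and the straightening relation of Observation \ref{obs:strtrewrite}.

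For the base case $k=1$, both $T$ and $T'$ are single-row tableaux of size $d$, and one must match Peeva's rule (1)/(2)/Proposition \ref{prop:EKDGprod}) against the first displayed line of Definition \ref{def:LDGprod}, namely $T \otimes T' \mapsto x_{p_2}\cdots x_{p_d}(T')_{p_1} + x_{i_1}\big((T\backslash p_1)\cdot(T\backslash q_1)\big)^{p_1}$; the sign discrepancy between the "$-x_{p_2}\cdots x_{p_d}(T')_{p_1}$" appearing in Proposition \ref{prop:EKDGprod} and the "$+$" in Definition \ref{def:LDGprod} is exactly absorbed by the $(-1)^{k-1}$ factors in $\phi$, and I will check this carefully, splitting on whether $p_1 \le i_1$ or $p_1 > i_1$ (in the former subcase the second term may vanish by Observation \ref{obs:vanishingProd}, in the latter the recursion kicks in). For the inductive step, I apply $d_{\ek} = \partial - \mu$ to decompose Peeva's product on $(f; J)$-type generators into smaller products, push $\phi$ through using Proposition \ref{prop:isoofEKandL} (which says $\phi$ is already a morphism of complexes), and then reassemble on the $L$-side using the matching clause of Definition \ref{def:LDGprod}. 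The three cases of the Srinivasan formula ($p_1 \le i_1$ with $i_1 \le q_2$; $p_1 \le i_1$ with $i_1 > q_2$; and otherwise) should correspond respectively to: the product collapsing to a single term, the product picking up the correction term $\frac{x_{q_2}}{x_{p_1}} T\cdot(T'\backslash q_2)^{p_1}$, and the "otherwise" branch where one factors out $T''$ and recurses — and each of these should line up with the analysis of $b(m_p x_{j_q})$ on the Eliahou–Kervaire side.

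The main obstacle I anticipate is bookkeeping the signs and the division operators $\frac{x_{q_2}}{x_{p_1}}$, $\frac{-1}{x_{q_2}\cdots x_{q_d}}$ appearing in Definition \ref{def:LDGprod}: these are formal operations that only make sense because the relevant monomials genuinely divide the tableau entries (this is where the structure of powers of $\m$ is essential), so I must confirm at each recursive step that the preconditions for those divisions hold, and that applying $\phi$ commutes with them. A secondary subtlety is that the straightening relation of Observation \ref{obs:strtrewrite} must be invoked to rewrite non-standard tableaux that arise after applying $\phi$ into the standard basis, and the alternating signs produced there must be reconciled with the $(-1)^k$ bookkeeping; I expect the proof to hinge on a clean induction hypothesis stated for \emph{all} $T'$ simultaneously (not a fixed one), so that the recursive calls $T \cdot (T'\backslash q_2)^{p_1}$, $T'' \cdot (T \cdot (T' \backslash i_1))$, etc., are covered. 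Once Lemma \ref{lem:dgFor1n} is in hand, the general case of Theorem \ref{thm:theyareSame} follows formally by writing an arbitrary left factor as an iterated product of degree-$1$ elements via the last displayed recursion in Definition \ref{def:LDGprod} and the corresponding reduction on the Eliahou–Kervaire side.
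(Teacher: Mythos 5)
Your overall plan is in the right spirit---reduce everything to the recursive definitions of the two products and match them clause by clause---but the induction scheme you propose does not terminate, and this is the main gap. You induct only on $k = |T'|$. However, the recursive calls you yourself list as needing to be covered, namely $T \cdot (T' \backslash q_1)^{p_1}$ on the Eliahou--Kervaire side (Proposition \ref{prop:EKDGprod}) and $\frac{x_{q_2}}{x_{p_1}}\, T \cdot (T' \backslash q_2)^{p_1}$ on the $L$-complex side (Definition \ref{def:LDGprod}), replace $T'$ by a tableau of the \emph{same} homological degree: only a row entry changes, not the column. So an induction hypothesis ``stated for all $T'$ simultaneously'' but graded by $|T'|$ never reaches a base case in these branches---already when both factors are single rows the recursion loops in homological degree $1$. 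The paper supplies the missing parameter: it performs a \emph{downward} induction on the length $j$ of the common prefix of the rows of $T$ and $T'$ (the largest $j$ with $p_i = q_i$ for all $i \le j$), which strictly increases at each such recursive call because $p_1 \le q_1$ is inserted into the row of $T'$; the base case $j = d$ gives $0 = 0$ on both sides. Induction on $|T'|$ is reserved for the remaining cases (either $p_1 > i_1$, or $p_1 \le i_1$, $p_1 > q_1$, $i_1 > q_1$), where the recursion $T \cdot T' = \frac{-1}{x_{q_1} \cdots x_{q_{d-1}}} T'' \cdot \big( T \cdot (T' \backslash i_1) \big)$ genuinely drops the homological degree.

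A second problem is your proposed engine for the inductive step: applying $d_{\ek} = \partial - \mu$ and ``pushing $\phi$ through'' via the chain-map property of Proposition \ref{prop:isoofEKandL}. This at best shows that $\phi(T \cdot T')$ and $\phi(T) \cdot \phi(T')$ have equal boundaries, so their difference is a cycle and hence, by acyclicity, a boundary from the next free module; a degree count (the coefficients of the products sit in internal degree $d-1$, while a potential preimage has coefficients in degree $d-2 \ge 0$) shows this does not force the difference to vanish once $d \ge 2$. Indeed, DG products on a minimal resolution are far from unique, so the coincidence of these two particular products cannot follow from a chain-map argument; it must be computed. The paper does exactly that, working in $S = R[x_1^{-1}, \dots, x_n^{-1}]$ so that the formal divisions in Definition \ref{def:LDGprod} are unproblematic, and comparing the two recursions directly in each case, using Observation \ref{obs:vanishingProd} to kill the correction terms when $i_1 \le q_1$. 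Your case skeleton ($p_1 \le i_1$ versus $p_1 > i_1$, matched against the three Srinivasan clauses) is correct, but without the prefix-length induction and with the chain-map shortcut in place of a direct computation, the argument does not close.
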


\begin{proof}
Let $S = R[x_1^{-1} , \dots , x_n^{-1}]$ denote the Laurent polynomial ring. It suffices to show that $\phi' := S \otimes \phi : S \otimes \ek_\bullet \to S \otimes L(\psi , d)$ is an isomorphism of algebras that restricts to an isomorphism of subalgebras $\ek_\bullet \to L(\psi , d)$. Write $$T = \ytableausetup
{boxsize=1.5em}
\begin{ytableau}
p_1 & p_2 &\cdots & p_d \\
\end{ytableau},$$
$$T' = \ytableausetup
{boxsize=1.5em}
\begin{ytableau}
i_1 & q_1 & q_2 &\cdots & q_d \\
\vdots \\
i_k
\end{ytableau}.$$
Let $0 \leq j \leq d$ be the largest integer such that $p_i = q_i$ for all $1 \leq i \leq j$; the proof of cases 1 and 2 below will follow by downward induction on $j$. In the base case $j=n$, one computes:
\begingroup\allowdisplaybreaks
\begin{align*}
    \phi ( T \cdot T' ) &= 0, \quad \textrm{and} \\
    \phi (T) \cdot \phi (T') &= x_{p_1} \cdots x_{p_{d-1}} (T')_{p_d , p_d} \\
    & = 0.
\end{align*}
\endgroup
For the inductive step, it is necessary to split the proof into cases:

\textbf{Case 1:} $p_1 \leq i_1$, $p_1 \leq q_1$, and $i_1 > q_1$ (or $k=0$). Employing Proposition \ref{prop:EKDGprod}, one computes:
\begingroup\allowdisplaybreaks
\begin{align*}
    \phi' (T \cdot T' ) &= \phi' \big( - x_{p_2} \cdots x_{p_d} (T')_{p_1} + x_{q_1} T \cdot (T' \backslash q_1)^{p_1} \big) \\
    &=  (-1)^{k} x_{p_2} \cdots x_{p_d} (T')_{p_1,q_d} +x_{q_1} \underbrace{\phi' (T) \cdot \phi' \big(  (T' \backslash q_1)^{p_1}  \big) }_{\textrm{by inductive hypothesis}}, \quad \textrm{and} \\
    \phi' (T) \cdot \phi' (T') &= (-1)^{k-1} x_{p_2} \cdots x_{p_d} (T')_{q_d , p_1} + x_{q_1} \phi (T) \cdot  \phi' \big(  (T' \backslash q_1)^{p_1}  \big) \\
    &=  (-1)^{k} x_{p_2} \cdots x_{p_d} (T')_{p_1 , q_d} + x_{q_1} \phi' (T) \cdot  \phi' \big(  (T' \backslash q_1)^{p_1}  \big) . 
\end{align*}
\endgroup
For the penultimate equality above, recall that since $d \geq 2$, one has $\phi' \big(  (T' \backslash q_1)^{p_1}  \big) = \big (\phi' (T') \backslash q_1 \big)^{p_1}$. 

\textbf{Case 2:} $p_1 \leq i_1$ and $i_1 \leq q_1$ (and $k \geq 1$). If $p_1 = i_1$, then this follows from Observation \ref{obs:vanishingProd} since both products are $0$, so assume $p_1 < i_1$. One computes:
\begingroup\allowdisplaybreaks
\begin{align*}
    \phi' (T \cdot T' ) &= \phi' \big( - x_{p_2} \cdots x_{p_d} (T')_{p_1} + x_{q_1} T \cdot (T' \backslash q_1)^{p_1} \big) \\
    &=  (-1)^{k} x_{p_2} \cdots x_{p_d} (T')_{p_1,q_d} + \underbrace{\phi' (T) \cdot \phi' \big(  (T' \backslash q_1)^{p_1}  \big) }_{\textrm{by Case 1}} \\
    &=  (-1)^{k} x_{p_2} \cdots x_{p_d} (T')_{p_1,q_d}.
\end{align*}
\endgroup
In the above, notice that $\phi' (T) \cdot \phi' \big(  (T' \backslash q_1)^{p_1}  \big) = 0$ since upon expanding $(T' \backslash q_1)^{p_1}_{q_d}$ as a linear combination of standard tableaux, each tableaux will have a $p_1$ appearing in the top corner. This means one may apply Observation \ref{obs:vanishingProd} to deduce that the product is $0$. On the other hand:
\begingroup\allowdisplaybreaks
\begin{align*}
    \phi' (T) \cdot \phi' (T') &=  (-1)^{k} x_{p_2} \cdots x_{p_d} (T')_{p_1,q_d}.
\end{align*}
\endgroup

\textbf{Cases 3 and 4:} Either $p_1 > i_1$, or $p_1 \leq i_1$, $p_1 > q_1$, and $i_1 > q_1$. This case follows by induction on the homological degree of $T'$; observe that when $|T'| = 1$, this is handled by Case 1. Let $|T'| > 1$ and define $T'' := \ytableausetup
{boxsize=1.5em}
\begin{ytableau}
i_1 & q_1 & q_2 &\cdots & q_d \\
\end{ytableau}$; observe that $|T' \backslash i_1| < |T'|$, so the inductive hypothesis applies to $T \cdot (T' \backslash q_1)$. One computes:
\begingroup\allowdisplaybreaks
\begin{align*}
    \phi' (T \cdot T') &= \frac{1}{x_{q_1} \cdots x_{q_{d-1}}} \phi' (T \cdot T'' \cdot (T \backslash i_1) ) \\
    &=\frac{-1}{x_{q_1} \cdots x_{q_{d-1}}} \underbrace{\phi' (T'')  \cdot \phi' \big( T \cdot (T' \backslash i_1) \big)}_{\textrm{by Case 1}} \\
    &= \frac{-1}{x_{q_1} \cdots x_{q_{d-1}}} \phi' (T'') \cdot \underbrace{\phi' (T) \cdot \phi' (T' \backslash i_1))}_{\textrm{By induction on} \ |T'|} \\
    &= \frac{(-1)^{k+1}}{x_{q_1} \cdots x_{q_{d-1}}}\phi' (T) \cdot T'' \cdot (T'' \backslash i_1 , q_d)_{q_d} \\
    &= \phi' (T) \cdot \phi' (T') .
\end{align*}
\endgroup
\end{proof}

Finally, we arrive at the main result of the paper:

\begin{theorem}\label{thm:theyareSame}
The map $\phi$ of Proposition \ref{prop:EKandLIso} is an isomorphism of DG-algebras.
\end{theorem}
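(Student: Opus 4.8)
### Proof proposal

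The plan is to bootstrap from Lemma \ref{lem:dgFor1n} to the general case. We already know that $\phi$ is an isomorphism of complexes (Proposition \ref{prop:isoofEKandL}) extending the identity in homological degree $0$, so it suffices to verify that $\phi(T \cdot T') = \phi(T) \cdot \phi(T')$ for all basis tableaux $T, T' \in \ek_\bullet$. Lemma \ref{lem:dgFor1n} handles the case $|T| = 1$; the remaining work is to reduce arbitrary products to this case by induction on $|T|$. The key tool is the inductive clause at the end of Definition \ref{def:LDGprod} together with the parallel inductive formula in Peeva's product (Definition \ref{def:DGstructureonEK}), both of which express the product of a tableau of homological degree $\ell$ with anything else in terms of products involving a tableau of homological degree $\ell - 1$.

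First I would fix notation: write a general basis tableau of $\ek_\bullet$ with $|T| = \ell + 1 \geq 2$ as
$$T = \ytableausetup{boxsize=1.5em}\begin{ytableau} j_1 & p_2 & \cdots & p_d \\ \vdots \\ j_\ell \end{ytableau}, \qquad T'' = \ytableausetup{boxsize=1.5em}\begin{ytableau} j_1 & p_2 & \cdots & p_d \\ \end{ytableau},$$
so that $T \backslash j_1$ has homological degree $\ell$. By Definition \ref{def:LDGprod}, on the $L$-side we have $T \cdot T' = \tfrac{1}{x_{p_2} \cdots x_{p_d}} T'' \cdot \big( (T \backslash j_1) \cdot T' \big)$, and one checks (this is where Peeva's corresponding inductive reduction, the case-$3$ branch of Definition \ref{def:DGstructureonEK} and its consequences, must be invoked) that the Eliahou--Kervaire product obeys the same recursion under the identification $\eta$ of Proposition \ref{prop:EKandTabiso}. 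Passing to the Laurent ring $S = R[x_1^{-1}, \dots, x_n^{-1}]$ as in the proof of Lemma \ref{lem:dgFor1n}, so that division by monomials is legitimate, one then computes
$$\phi(T \cdot T') = \tfrac{1}{x_{p_2} \cdots x_{p_d}}\, \phi\big( T'' \cdot ((T \backslash j_1) \cdot T') \big) = \tfrac{1}{x_{p_2} \cdots x_{p_d}}\, \phi(T'') \cdot \phi\big( (T \backslash j_1) \cdot T' \big),$$
using Lemma \ref{lem:dgFor1n} for the outer product (since $|T''| = 1$) and the inductive hypothesis on $|T \backslash j_1| = \ell < \ell+1$ for the inner one. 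This equals $\tfrac{1}{x_{p_2} \cdots x_{p_d}}\, \phi(T'') \cdot \big( \phi(T \backslash j_1) \cdot \phi(T') \big)$, and applying associativity of the $L$-product (Theorem \ref{thm:LcompisDG}) together with the recursion of Definition \ref{def:LDGprod} read backwards collapses this to $\phi(T) \cdot \phi(T')$. Finally, since $\phi$ and all the products are $R$-bilinear, the identity on basis elements extends to all of $\ek_\bullet \otimes \ek_\bullet$, and since $\phi_0 = \id$ the isomorphism is unital; the graded-commutativity and square-zero axioms are automatic because both complexes are already known to be commutative associative DG algebras and $\phi$ is a bijective chain map intertwining the products.

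The main obstacle I anticipate is the bookkeeping in verifying that Peeva's product on $\ek_\bullet$ genuinely satisfies the same "peel off the first row" recursion $T \cdot T' = \tfrac{1}{x_{p_2}\cdots x_{p_d}} T'' \cdot ((T\backslash j_1)\cdot T')$ that Srinivasan's product satisfies by definition. This is not literally one of the defining clauses of Definition \ref{def:DGstructureonEK}; it must be extracted from cases $3$--$5$ of that definition (and their symmetric counterparts) via a careful analysis of the chains $f_0, f_1, \dots$ attached to the generators $x_{j_1} \cdots x_{j_d} x_{p_2}\cdots x_{p_d}$ versus $x_{p_2}\cdots x_{p_d}$, and of how the "beginning" operator $b(-)$ behaves on these monomials for the ideal $\m^d$. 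A secondary subtlety is making sure the downward/outward inductions are well-founded and that the Laurent-ring localization does not introduce spurious solutions — i.e. that the computed identity, a priori in $S \otimes \ek_\bullet$, actually takes place in $\ek_\bullet$, which follows because $\phi$ is already known to be an honest isomorphism over $R$ and all terms in sight are images of $R$-basis elements.
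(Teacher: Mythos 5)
Your proposal is correct and follows essentially the same route as the paper: induction on $|T|$ with Lemma \ref{lem:dgFor1n} as the base case, peeling off the first-column entry via the recursion $T \cdot T' = \tfrac{1}{x_{p_2}\cdots x_{p_d}}\, T'' \cdot \big((T\backslash j_1)\cdot T'\big)$ after localizing to the Laurent ring, and then applying the lemma to the outer degree-one factor and the inductive hypothesis to the inner product. The obstacle you flag --- that Peeva's product obeys the same first-row recursion --- is indeed the step the paper's own proof uses implicitly in its first displayed equality, so your extra caution there is warranted but does not change the argument.
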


\begin{proof}
The proof follows by induction on $|T|$. The hard part is the base case $|T|=1$, which was already done in the proof of Lemma \ref{lem:dgFor1n}. Again, let $S = R[x_1^{-1} , \dots , x_n^{-1}]$ denote the Laurent polynomial ring and $\phi' := S \otimes \phi : S \otimes \ek_\bullet \to S \otimes L(\psi , d)$. For the general case, assume that $|T| >1$ and let $$T = \ytableausetup
{boxsize=1.5em}
\begin{ytableau}
j_1 & p_1 &\cdots & p_{d} \\
\vdots \\
j_\ell
\end{ytableau} \qquad T'' = \ytableausetup
{boxsize=2.5em}
\begin{ytableau}
j_1 & p_1 &\cdots & p_{d-1} \\
\end{ytableau}, \quad \textrm{and}$$
$$T' = \ytableausetup
{boxsize=1.5em}
\begin{ytableau}
i_1 & q_1 & q_2 &\cdots & q_d \\
\vdots \\
i_k
\end{ytableau}.$$
Then,
\begingroup\allowdisplaybreaks
\begin{align*}
    \phi' (T \cdot T') &= \frac{1}{x_{p_1} \cdots x_{p_{d-1}}} T'' \cdot \phi \big( (T \backslash j_1) \cdot T' \big) \\
    &= \frac{1}{x_{p_1} \cdots x_{p_{d-1}}} T'' \cdot \underbrace{\phi' (T \backslash j_1) \cdot \phi' (T')}_{\textrm{by induction on} \ |T|} \\
     &= \frac{(-1)^{\ell + k} }{x_{p_1} \cdots x_{p_{d-1}}} T'' \cdot (T \backslash j_1 , p_d)_{p_d} \cdot (T' \backslash q_d)_{q_d} \\
    &= \phi' (T) \cdot \phi' (T').  
\end{align*}
\endgroup
\end{proof}

\bibliographystyle{amsplain}
\bibliography{biblio}
\addcontentsline{toc}{section}{Bibliography}

\end{document}